\newcommand{\CP}[1]{\mathbb{C}P^{#1}}      %complex projective spaces
\begin{document}
\raggedbottom
\pagestyle{myheadings}
\title
{\textsc{Rank three representations of Painlev\'e systems: I. Wild character varieties}}

\author{by \textsc{Mikl\'os Eper}\footnote{\textsl{Department of Algebra and Geometry, Institute of Mathematics, Faculty of Natural Sciences, Budapest University of Technology and
Economics, M\H uegyetem rkp. 3., Budapest H-1111, Hungary}, e-mail: \href{mailto:epermiklos@gmail.com}{epermiklos@gmail.com}} and \textsc{Szil\'ard Szab\'o}\footnote{Institute of Mathematics, Faculty of Science, E\"otv\"os Lor\'and University, P\'azm\'any P\'eter s\'et\'any 1/C, Budapest, Hungary, H-1117; Alfr\'ed R\'enyi Institute of Mathematics,
Re\'altanoda utca 13-15., Budapest 1053, Hungary, e-mails:\href{mailto:szilard.szabo@ttk.elte.hu}{szilard.szabo@ttk.elte.hu} and \href{mailto:szabo.szilard@renyi.hu}{szabo.szilard@renyi.hu}}
\orcidlink{https://orcid.org/my-orcid?orcid=0009-0008-7435-2021}}
\begin{comment}
\author[1]{\textsc{Mikl\'os Eper}}
\author[2]{\textsc{Szil\'ard Szab\'o}}
\affil[1]{\textsl{(Department of Algebra and Geometry, Institute of Mathematics, Faculty of Natural Sciences, Budapest University of Technology and
Economics, M\H uegyetem rkp. 3., Budapest H-1111, Hungary)}}
\affil[2]{\textsl{(Department of Algebra and Geometry, Institute of Mathematics, Faculty of Natural Sciences, Budapest University of Technology and
Economics, M\H uegyetem rkp. 3., Budapest H-1111, Hungary; Alfr\'ed R\'enyi Institute of Mathematics,
Re\'altanoda utca 13-15., Budapest 1053, Hungary)}}
\end{comment}
\maketitle

%%%%%%%%%%%%%%%%%%%%%%%%%%%%%%%%%%%%%%%%%%%%

\begin{abstract}
We give explicit cubic equations for the wild character varieties corresponding to the rank $3$ representations of Painlev\'e equations, and compare them to the ones of their classical rank $2$ representations. 
\end{abstract}

%%%%%%%%%%%%%%%%%%%%%%%%%%%%%%%%%%%%%%%%%%%
\newtheorem{theorem}{Theorem}[section]
\newtheorem{corollary}[theorem]{Corollary}
\newtheorem{conjecture}{Conjecture}[section]
\newtheorem{lemma}[theorem]{Lemma}
\newtheorem{exmple}[theorem]{Example}
\newtheorem{defn}[theorem]{Definition}
\newtheorem{prop}[theorem]{Proposition}
\newtheorem{rmrk}[theorem]{Remark}
\newtheorem{claim}[theorem]{Claim}

            %%% for no-italic, numbered environments, use:
\newenvironment{definition}{\begin{defn}\normalfont}{\end{defn}}
\newenvironment{remark}{\begin{rmrk}\normalfont}{\end{rmrk}}
\newenvironment{example}{\begin{example}\normalfont}{\end{example}}
            %%% for unnumbered environments, use f.e.
\newenvironment{acknowledgement}{{\bf Acknowledgement:}}

\newcommand\restr[2]{{% we make the whole thing an ordinary symbol
  \left.\kern-\nulldelimiterspace % automatically resize the bar with \right
  #1 
  \vphantom{\big|} % pretend it's a little taller at normal size
  \right|_{#2} % this is the delimiter
  }}

%%%%%%%%%%%%%%%%%%%%%%%%%%%%%%%%%%%%%%%%%%%%%%%%%%%%%%%%%%

\section{Introduction}

Since their discovery at the beginning of the 20th century, the Painlev\'e equations have attracted a lot of attention, and found a plethora of applications ranging from the theory of random matrices to integrable systems and quantum field theory. %in various areas of Mathematics and Mathematical Physics. 
A classical result in the field due to Fuchs and Garnier (see Jimbo and Miwa~\cite[Appendix~C]{JM}) is that the Painlev\'e equations all admit descriptions as isomonodromy equations of rank $2$ connections with (regular and irregular) singularities on the Riemann sphere. 
Meanwhile, Flaschka and Newell~\cite{FN} found a different such rank $2$ isomonodromy equation description of the second Painlev\'e equation Painlev\'e II.
This development pointed out that the Painlev\'e equations may admit several isomonodromic interpretations. 
Such alternative interpretations (known in the literature as Lax pair representations) are important in order to recognize the relevance of the Painlev\'e equations for further potential applications. 
Another Lax pair representation, this time in terms of rank $3$ singular connections, for the Painlev\'e VI equation was found by Harnad~\cite[Section~3c]{Har}. 
Subsequently, it was proved by Mazzocco~\cite{Maz} that Harnad's duality can in fact be considered as an instance of Fourier--Laplace transformation of $\mathcal{D}$-modules. 
%Around the same time, another Lax pair for Painlev\'e II, still of rank $2$, was found by Harnad, Tracy and Widom~\cite{HTW}. 
Later, Joshi, Kitaev and Treharne~\cite{JKTI},~\cite{JKTII} gave a list of rank $3$ Lax (or, using their terminology, Fuchs--Garnier) pair representations of all the Painlev\'e equations.\footnote{We note, however, that for Painlev\'e III they did not quite find a connection, because the differential term is multiplied by a singular matrix.} 
More generally, in~\cite[Sections~9,~10]{Boa5}, Boalch described a way to produce several Lax representations of a class of wild character varieties, namely the ones that are multiplicative quiver varieties with a "supernova graph" as underlying graph. 
By assumption, these cases must have unramified irregular type. 
Among the Painlev\'e cases studied here, this applies to IVb and VI. 
Finally, in~\cite[Section~4]{Dou} Dou\c{c}ot upgraded Boalch' quiver variety interpretation to include the rank $2$ and $3$ Lax representations of all Painlev\'e spaces; notice though that, contrary to the present paper, Dou\c{c}ot works with the spaces of connections, i.e. on the de Rham side. 

The aim of the present article is to study the wild character varieties obtained from these Joshi--Kitaev--Treharne (JKT) irregular connections upon application of the Riemann--Hilbert--Birkhoff correspondence. 
In our prior work~\cite{ESz}, we used trace coordinates to identify the character varieties in the logarithmic case as a family of affine cubic surfaces. 
Here, we use the coefficients of the Stokes matrices to give a similar description of the wild character varieties, again as affine cubic surfaces. 
On the other hand, the wild character varieties corresponding to the Painlev\'e cases have been analyzed in detail by van der Put and Saito~\cite{vPS}. 
In particular, they presented these spaces as affine cubic surfaces. 
\begin{theorem}
\label{Thm:thm1}
    Under suitable choices of parameters, the affine cubic surfaces describing the JKT wild character varieties are isomorphic to those of the corresponding Painlev\'e case. 
\end{theorem}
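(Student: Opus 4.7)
The plan is to proceed case by case through the list of Painlev\'e equations (I, II, III, IV with its sub-cases, V, VI), since both the JKT rank $3$ Lax pairs and the van der Put--Saito cubic normal forms are built on a case-by-case basis and there is no uniform source of an isomorphism a priori. For each equation, I would first read off from the JKT connection the singularity data: the locations of the singularities on $\mathbb{C}P^1$, the irregular types (Poincar\'e ranks, leading terms of the formal types), and the formal monodromy exponents. This fixes the combinatorial shape of the wild character variety: the number of Stokes sectors at each irregular singularity, hence the Stokes groups, together with the formal monodromies whose eigenvalues serve as \emph{local parameters} of the moduli space.

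Next, I would set up explicit coordinates on the Betti side. At each irregular singularity one obtains a tuple of Stokes matrices (upper/lower triangular with respect to the Stokes directions); at each regular singularity one has a monodromy conjugacy class. The wild character variety is then the GIT quotient of the variety cut out by the Stokes relation (product of formal monodromies times Stokes matrices equals identity, up to the appropriate twist) by simultaneous conjugation by $\mathrm{GL}_3$. The key computational step is to pick three entries of the Stokes matrices (or traces of short words in them) as free coordinates and to express every other invariant polynomially in these three; the residual single relation is then, by a dimension count together with the classical structure of the ring of $\mathrm{GL}_3$-invariants on triples of matrices, a cubic in these coordinates. This mirrors precisely the computation carried out in the logarithmic case in~\cite{ESz}, except now the traces are replaced by coefficients of Stokes matrices.

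With a cubic equation in hand on the JKT side, I would match it against the vdPS cubic by an affine change of coordinates $(x_1,x_2,x_3)\mapsto (\alpha_i x_i + \beta_i)$, together with a reparametrization of the constants that expresses the JKT monodromy/Stokes exponents in terms of vdPS' local parameters. The candidate change of coordinates is essentially forced: the degree-two part of the cubic on each side is determined by the monomials $x_ix_j$ whose coefficients carry the local monodromy data, so comparing these coefficients pins down $\alpha_i$ uniquely up to permutation, after which $\beta_i$ and the rescaling of the parameters are determined by matching the linear and constant terms. Verifying that the isomorphism is well-defined (i.e.\ that the parameter relations are consistent) is the final check.

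The main obstacle will be Painlev\'e III, flagged already in the footnote about JKT: because the differential term is multiplied by a singular matrix, the JKT object is not literally a connection, and one must first reinterpret it (following Dou\c{c}ot~\cite{Dou}) as a meromorphic connection of appropriate irregular type before the Stokes formalism applies. A secondary difficulty is bookkeeping: the cases IVb and VI admit the unramified ``supernova'' description of Boalch~\cite{Boa5}, and some degenerations involve ramified irregular types at which Stokes matrices acquire non-standard block form; in those cases the candidate affine change of coordinates must be written down and verified by direct expansion, which is the unavoidable part of the argument, though conceptually routine once the coordinates of the previous paragraph have been fixed.
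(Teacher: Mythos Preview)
Your overall strategy---case-by-case, read off the Stokes data, impose the monodromy relation, reduce to a cubic, then match to the van der Put--Saito normal form by an affine change of coordinates---is exactly what the paper does in Section~\ref{sec:Betti}. Two points, however, need correcting.

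First, Painlev\'e~III is not in the scope of Theorem~\ref{Thm:thm1}: the six JKT cases treated are VI, V, IVa, IVb, II, I (see Table~\ref{tab1}), and the footnote you cite is precisely the reason III is excluded. Your ``main obstacle'' is therefore not part of the theorem and should be dropped, not surmounted.

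Second, and more substantively, the quotient you must take is not by simultaneous $\mathrm{GL}_3$-conjugation with Procesi-type trace invariants, but by the exponential torus~$H$ at the irregular point (equation~\eqref{eq:twisted_torus}), whose dimension is $2$, $1$, or $0$ according as the singularity is untwisted, minimally twisted, or maximally twisted. This is the content of Lemma~\ref{lem:exptorus}: the invariant ring is generated by explicit monomials $x_1x_4,\,x_2x_5,\,x_3x_6,\,x_1x_3x_5,\,x_2x_4x_6$ in the Stokes entries, subject to a single tautological relation $UVW=RT$. With these coordinates the elimination to a cubic is mechanical (three equations in five variables in each case), and the affine match to~\cite{vPS} follows as you describe. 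Your phrase ``traces of short words'' and the appeal to $\mathrm{GL}_3$-invariant theory would send you down the wrong computational path; the correct invariants are much simpler, and identifying them is the actual lemma that makes the proof work.
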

For the concrete correspondence with the parameters of the Painlev\'e wild character varieties, see Remarks~\ref{rem:JKTVI}--\ref{rem:JKTII}. 
As opposed to the logarithmic case~\cite{ESz}, the equations we find here possess a single degree $3$ term, of the form $XYZ$. 

The fundamental reason behind Theorem \ref{Thm:thm1} is that Fourier--Laplace transformation establishes an isometry between the six de Rham moduli spaces of rank three irregular connections belonging to the JKT systems and the corresponding moduli spaces of rank two irregular connections belonging to Painlev\'e systems. 
The details of this correspondence will be discussed in a sequel to this paper~\cite{ESz_Fourier}. 
We note that a similar relationship was shown to hold for connections with a single untwisted irregular singularity of Poincar\'e rank $1$, namely the Stokes data can be derived from the monodromy data of the Fourier--Laplace transformed Fuchsian system~\cite{BJL}.

Character varieties of (irregular) surfaces are of interest and admit applications throughout mathematics and mathematical physics. 
There are also quite interesting recent conjectures about them, for instance the Geometric P=W conjecture. 
Even though many theoretical results are known in the irregular case, relatively few concrete examples exist in the literature, especially in the higher (than $2$) rank twisted case. 
This study illustrates how the general theory of~\cite{Boa5} works in practice, by providing a detailed description of a family of rank $3$ examples that is known to be related to the well-known Painlev\'e family. 

In Section~\ref{sec:prelim} we list a few fundamental results needed in the sequel. 
In Section~\ref{sec:JKT} we spell out the local forms of the connections that we deal with. 
Finally, in Section~\ref{sec:Betti} we get, case by case, equations for the wild character varieties.

\section{Preliminaries}\label{sec:prelim}

\subsection{Meromorphic connections} \label{sec:connection}

Let $\mathcal{D}_X$ be the sheaf of analytic differential operators on the smooth projective complex curve $X$. 
It is the sheaf of noncommutative $\mathcal{O}_X$-algebras that is generated over $\operatorname{Spec}\mathbb{C}[z]$ by $\partial_z$, subject to the relation $[\partial_z, z] = 1$. 
It has a similar description over $\operatorname{Spec}\mathbb{C}[w]$, with the relationship $w^{-1}\partial_w = - z^{-1}\partial_z$ between its generators. 
Similarly, one can define the noncommutative algebra $\mathbb{C}[t] \langle \partial_t \rangle$ of polynomial differential operators on $X$. 
A left module $\mathbb{M}$ over $\mathcal{D}_X$ is said to be holonomic if it is finitely generated and torsion. 
In the sequel, $\mathbb{M}$ will always denote a holonomic left $\mathcal{D}_X$-module. 
The rank of $\mathbb{M}$ is then defined as 
\[
    \operatorname{rk} \mathbb{M} = \dim_{\mathbb{C}(z)} \mathbb{C}(z) \otimes_{\mathbb{C}[z]} \mathbb{M} .
\]
Moreover, there exists a maximal Zariski open subset $U\subset X$ such that $\mathbb{M}|_U$ is finitely generated over $\mathcal{O}_U$. 
The set of singular points of $\mathbb{M}$ is then defined to be $X\setminus U$, and denoted by $C = \operatorname{Sing}(\mathbb{M})$. We denote by $j$ the inclusion $U\to X$. 

\subsubsection{Formal theory}

We will focus on the formal structure of $\mathbb{M}$ at $\operatorname{Sing}(\mathbb{M})$. 
For any $c\in \operatorname{Sing}(\mathbb{M})$ we pick a local holomorphic coordinate $t$ of $X$. 
For $c\in \operatorname{Spec}\mathbb{C}[z]$, we may take $t = z-c$, and for $c=\infty$, we take $t=w$. 
A special role is played by regular singular modules, which are by definition the ones that admit a fundamental system of at most polynomial growth in $|t|$ as $t\to 0$ within a sector of finite opening. 
We next consider 
\[
    \mathbb{M}_c =  \mathbb{C} \llbracket t \rrbracket \langle \partial_t \rangle \otimes_{\mathbb{C}[t] \langle \partial_t \rangle} \mathbb{M} .
\]
The Hukuhara--Levelt--Turrittin theorem~\cite{Huk},~\cite{Lev},~\cite{Tur}  then states that there exists some positive integer $d$ and a finite Galois extension 
\begin{equation}\label{eq:extension}
    K = \mathbb{C} (\!( u , t )\!)/(u^d - t)     
\end{equation}
of $\mathbb{C} (\!( t )\!)$  such that 
\begin{equation}\label{eq:HLT}
    K\otimes_{\mathbb{C}[t]} \mathbb{M} \cong \bigoplus_{q\in K/\mathbb{C}\llbracket u \rrbracket} 
(\mathbb{C}\llbracket u \rrbracket ,\operatorname{d}+\operatorname{d}\! q) 
\otimes_{\mathbb{C}\llbracket u \rrbracket} \mathcal{F}_{q} .
\end{equation}
Here $\operatorname{d}$ stands to denote the trivial connection and the exterior differentiation operator with respect to $u$ and $\mathcal{F}_{q}$ is a free $\mathbb{C}\llbracket t \rrbracket$-module with regular singular connection. 
For all but finitely many values of $q$ the rank of $\mathcal{F}_{q}$ is $0$. 
The decomposition is unique up to permutations, gauge transformations, and further field extensions. 

We will be interested in the special cases of this general theory, detailed in Section \ref{sec:JKT}.

\subsection{Wild character varieties} \label{wild character varieties}
In this section we define wild character varieties over $X=\CP1$ with structure group $G=\operatorname{GL}(3,\mathbb{C})$. 
Due to the presence of irregular singularities, these involve data called Stokes matrices. 
The discovery of the Stokes phenomenon goes back to the study of the Airy equation by Stokes, and was later studied by many authors, see~\cite{Was}. 
Note that the assumption $g=0$ causes simplifications in some of the formulas. 
We will freely use the language of quasi-Hamiltonian geometry~\cite{AMM}. 

\subsubsection{Stokes local systems -- untwisted case}
We fix a maximal torus $T\subset G$ with Lie algebra $\mathfrak{t}$. 
An irregular curve~\cite[Definition~8.1]{Boa4} is the data of a compact Riemann surface $X$ (that we will always take to be $\CP1$), a finite set of points $\mathbf{a} = \{ a_i \}_{i=1}^m$, $a_i \in X$ and for all $i$ an irregular type 
\begin{equation}
\label{eq:irreg}
    Q_i = \frac{A_{r_i}}{z_i^{r_i}} + \cdots + \frac{A_{1}}{z_i}
\end{equation}
for some holomorphic coordinate function $z_i$ centered at $a_i$ and $A_j\in \mathfrak{t}$. This is essentially \eqref{eq:HLT} expanded in matrix form.
Then $Q_i$ singles out a subgroup 
\[
    H_i = \{ g\in G \vert \, \operatorname{Ad}_g(A_j) = A_j \; \mbox{for all}\; j\geq 1 \} \subset G
\]
called its stabilizer.
Let $\mathcal{R}\subset \mathfrak{t}^{\vee}$ be the set of roots of the Lie algebra $\mathfrak{g}$ of $G$ with respect to $\mathfrak{t}$, and for any $\alpha\in\mathcal{R}$ let us denote by $\mathfrak{g}_{\alpha}$ the corresponding root space. 
Consider the real oriented blow-up $\widetilde{X}$ of $X$ at $a_i$. 
Let us denote by $\partial_i$ the fiber of $\widetilde{X}$ over $a_i$, which is a boundary circle. 
Elements of $\partial_i$ are called directions at $a_i$, and are denoted by $d$. 
A direction $d$ is called a singular direction supported by $\alpha\in\mathcal{R}$ if it is the tangent direction to one of the components of the curve 
\begin{equation}\label{eq:singular_direction}
    \operatorname{Im}(\alpha \circ Q (z_i )) = 0, \quad \operatorname{Re}(\alpha \circ Q (z_i )) < 0.     
\end{equation}
It is clear that if $\alpha \circ Q$ has a pole of order $k$ then there exist exactly $k$ singular directions supported by $\alpha$ (and the $k$ singular directions supported by $-\alpha$ are their opposites). 
For any singular direction $d$ let us denote by $\mathcal{R}(d)$ the set of roots $\alpha$ supporting the singular direction $d$. 
For all but finitely many $d\in \partial_i$, we have $\mathcal{R}(d)=\varnothing$. 
Given $d\in \partial_i$ singular, we define the unipotent subgroup $\mathbb{S}\!\operatorname{to}_d (Q_i) \subset G$ as the closed subgroup corresponding to the Lie subalgebra spanned by $\mathfrak{g}_{\alpha}$ as $\alpha$ ranges over $\mathcal{R}(d)$. 
If $d\in \partial_i$ is not singular, then $\mathbb{S}\!\operatorname{to}_d (Q_i)=\{ \operatorname{I} \}$. 
Finally, we set 
\[
    \mathbb{S}\!\operatorname{to}(Q_i)= \prod_{d\in \partial_i} \mathbb{S}\!\operatorname{to}_d (Q_i),
\]
where $\prod$ stands for the Cartesian product of the finitely many factors taken in some fixed order (say, for instance, positive once around $\partial_i$). 
See \cite[Section~7]{Boa4}. 

In~\cite[Section~8.1]{Boa4}, the notion of Stokes representation is introduced as a representation of the fundamental groupoid $\Pi$ of the irregular curve (having one base point near all the punctures and a further base point in the interior) in $G$ obeying some technical conditions. 
Their set is denoted by $\operatorname{Hom}_{\mathbb{S}}(\Pi , G)$. 
\begin{theorem}\label{thm:qH}
    $\operatorname{Hom}_{\mathbb{S}}(\Pi , G)$ is a smooth affine variety and carries a canonical quasi-Hamiltonian $H_1\times \cdots \times H_m$-space structure. 
\end{theorem}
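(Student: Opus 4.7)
The plan is to realise $\operatorname{Hom}_{\mathbb{S}}(\Pi, G)$ as a quasi-Hamiltonian fusion of local building blocks, one per irregular singularity, following Boalch's fission--fusion strategy. I would begin by fixing an explicit presentation of the fundamental groupoid $\Pi$: place an auxiliary base point $b_i$ on each boundary circle $\partial_i$ together with a path from the interior base point to $b_i$; the generators then split into a purely local family at each puncture --- a formal monodromy $h_i\in H_i$ and one Stokes factor $S_d\in \mathbb{S}\!\operatorname{to}_d(Q_i)$ for each singular direction $d\in \partial_i$ --- together with the transports along the chosen paths. Since $X=\CP 1$ and $g=0$ there are no internal loop generators, so the only global relation is that the ordered product of the local total monodromies equals $\operatorname{I}$.

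Then I would recall Boalch's \emph{fission space} $\mathcal{A}(Q_i)$ attached to each irregular type, which parametrises the local data at $a_i$ together with a $G$-valued outer coordinate. By the local fission theorem of~\cite{Boa4}, $\mathcal{A}(Q_i)$ is a smooth affine variety carrying a canonical quasi-Hamiltonian $G\times H_i$-structure whose $G$-valued moment map records the local total monodromy at $a_i$ and whose $H_i$-valued moment map records $h_i$. Smoothness and affineness are manifest in the explicit product description involving $G/H_i$, the relevant unipotent Stokes groups, and $H_i$.

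Next I would fuse these local spaces by the Alekseev--Malkin--Meinrenken fusion operation~\cite{AMM} along the diagonal $G$-action, producing a smooth affine quasi-Hamiltonian $G\times H_1\times\cdots\times H_m$-space whose $G$-moment map is the ordered product of the local $G$-moment maps. The genus-zero closing relation is exactly the condition that this $G$-moment map take the value $\operatorname{I}\in G$, so $\operatorname{Hom}_{\mathbb{S}}(\Pi,G)$ is identified with the fibre over $\operatorname{I}$; the general fusion--reduction machinery of~\cite{AMM} then equips this fibre with the desired residual quasi-Hamiltonian $H_1\times\cdots\times H_m$-structure.

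The main technical hurdle is to verify that $\operatorname{I}$ is a regular value of the $G$-moment map, which is what simultaneously ensures smoothness of the fibre and validity of the residual quasi-Hamiltonian axioms. The transversality follows from the observation that the outer $G$-coordinate present in every fission space already contributes a free $G$-factor to the differential of the moment map, so its image exhausts $\mathfrak{g}$ at every point of the fusion; consequently the fibre is a smooth closed affine subvariety, and the quasi-Hamiltonian structure transfers mechanically from the fusion. Affineness is preserved throughout, since we only intersect the affine fusion with a closed subvariety and do not divide by any group action.
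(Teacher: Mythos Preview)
Your sketch is essentially Boalch's own argument from~\cite{Boa4}, and it is the correct route to the result. Note, however, that the present paper does not prove Theorem~\ref{thm:qH} at all: it is stated in the Preliminaries as a known input from~\cite{Boa4} (and, for the twisted case, from~\cite{BY}), with no accompanying proof. So there is nothing in the paper to compare your proposal against; you have simply reconstructed the proof of the cited source, which is fine, but the paper treats the theorem as a black box rather than supplying an argument.
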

We have an explicit description of $\operatorname{Hom}_{\mathbb{S}}(\Pi , G)$ as $\mu_G^{-1} (1)/G$, where $\mu_G$ is the $G$-valued moment map 
\begin{align*}
    \mu_G\colon \prod_{i=1}^m \left( G \times H_i \times \mathbb{S}\!\operatorname{to}(Q_i) \right) &\to G \\ 
    \{ (C_i, h_i , S^i_1, S^i_2, \ldots ) \}_{i=1}^m & \mapsto \mu_1 \cdots \mu_m, 
\end{align*}
with 
\[
 \mu_i = C_i^{-1} h_i \cdots S^i_2 S^i_1 C_i \in G. 
\]
% \begin{theorem}\cite[Corollary~8.3]{Boa4}
% The set of $S$-equivalence classes of Stokes $G$-local systems is a holomorphic Poisson variety. 
% \end{theorem}
%Furthermore, in~\cite[Remark~8.4]{Boa4} the notion of a Stokes $G$-local system is introduced. 
\begin{theorem}[Riemann--Hilbert--Birkhoff correspondence]\cite[Corollary~A.4]{Boa4}\label{thm:RHB}
The isomorphism classes of meromorphic connections on algebraic principal $G$-bundles on $X$ with irregular type $Q_i$ at $a_i$ correspond bijectively to the $H_1\times \cdots \times H_m$-orbits in $\operatorname{Hom}_{\mathbb{S}}(\Pi , G)$. 
\end{theorem}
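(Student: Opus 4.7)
The plan is to construct mutually inverse assignments between the two sides, working locally at each $a_i$ and then gluing globally on $X\setminus\mathbf{a}$. Fix a connection $(\mathcal{E},\nabla)$ on an algebraic principal $G$-bundle with prescribed irregular types $Q_i$. Away from $\mathbf{a}$, classical Riemann--Hilbert already yields a flat bundle, so the monodromy around paths in $U=X\setminus\mathbf{a}$ is handled by parallel transport; the genuinely new content is the encoding of the local analytic data at each puncture via formal monodromy and Stokes factors.

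First I would work locally at a fixed puncture $a_i$ with formal coordinate $t$. Apply the Hukuhara--Levelt--Turrittin decomposition \eqref{eq:HLT}: because the given $Q_i$ is unramified (takes values in $\mathfrak{t}$), the extension $K/\mathbb{C}(\!(t)\!)$ is trivial, and \eqref{eq:HLT} takes the form $\bigoplus_\alpha (\mathrm{d}+\mathrm{d}Q_i^\alpha)\otimes\mathcal{F}_\alpha$ where $Q_i^\alpha$ runs over the eigenvalues of $Q_i$ and $\mathcal{F}_\alpha$ is regular singular. The formal monodromy of this decomposition, read in a compatible framing, gives an element $h_i$ which by construction commutes with every $A_j$, hence lies in $H_i$. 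Next, by the Malgrange--Sibuya theorem (or equivalently, the multisummation of formal solutions in the unramified case), on each Stokes sector at $a_i$ the formal isomorphism lifts uniquely to a holomorphic isomorphism between $(\mathcal{E},\nabla)$ and its formal model. On the overlap of two consecutive sectors separated by a singular direction $d$, these lifts differ by an automorphism $S^i_d$ of the formal model; the key computation, using \eqref{eq:singular_direction}, identifies $S^i_d$ as an element of the unipotent group $\mathbb{S}\!\operatorname{to}_d(Q_i)$, the point being that only those root-space directions which satisfy $\operatorname{Re}(\alpha\circ Q_i)<0$ on the overlap can contribute without destroying the asymptotic expansion.

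To globalise, fix base points near each puncture and one interior base point as in Boalch's setup, and let $C_i$ denote parallel transport from the interior base point to the $i$th puncture base point. The tuple $(C_i,h_i,\{S^i_d\}_d)_{i=1}^m$ then defines a point of $\prod_i(G\times H_i\times\mathbb{S}\!\operatorname{to}(Q_i))$. The cocycle condition expressing that the total monodromy around the sphere is trivial amounts precisely to $\mu_1\cdots\mu_m=1$ with $\mu_i=C_i^{-1}h_i\cdots S^i_2 S^i_1 C_i$, thanks to the genus zero assumption. Thus we have produced a point of $\mu_G^{-1}(1)$, well-defined modulo the simultaneous $G$-action coming from changing the interior trivialisation, and modulo the $H_i$-actions coming from changing the formal framings at each puncture. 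For the inverse construction, start with a Stokes representation: use the Malgrange--Sibuya gluing theorem to build, from the prescribed $\{S^i_d\}_d$, a local analytic connection at $a_i$ with formal type $\mathrm{d}+\mathrm{d}Q_i$ and formal monodromy $h_i$; then glue these local connections to the flat bundle on $U$ determined by $\{C_i\}$ via the prescribed transition data, obtaining a meromorphic connection on $X$ with the required irregular types.

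The main obstacle is the Malgrange--Sibuya step in both directions: one must show that the sheaf of sectorial isomorphisms to the formal model admits sections on every Stokes sector, and that the resulting $1$-cocycle takes values precisely in the Stokes groups $\mathbb{S}\!\operatorname{to}_d(Q_i)$ and not in a larger sectorial automorphism group. This requires the sharp asymptotic analysis that identifies which root subgroups can contribute on a given overlap, and is what ultimately forces the very definition \eqref{eq:singular_direction} of singular direction. A secondary but nontrivial task is the bookkeeping of base points and the $G$- and $H_i$-equivariances, needed to pass from raw Stokes data to genuine isomorphism classes; this is exactly packaged by the quotient $\mu_G^{-1}(1)/G$ together with the $\prod_i H_i$-action of Theorem~\ref{thm:qH}.
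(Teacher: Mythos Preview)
The paper does not supply its own proof of this theorem: it is quoted verbatim as \cite[Corollary~A.4]{Boa4} and used as a black box. There is therefore nothing in the paper to compare your argument against. Your sketch is a reasonable outline of the standard Malgrange--Sibuya approach (formal decomposition, sectorial lifts, identification of the Stokes cocycle with the groups $\mathbb{S}\!\operatorname{to}_d(Q_i)$, and global gluing), which is indeed the route taken in Boalch's appendix, but none of that material appears in the present paper; here the result is simply cited and then applied in Section~\ref{sec:Betti}.
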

One may restrict attention to the set of isomorphism classes of connections whose associated Stokes representation is irreducible. 
By~\cite[Theorem~9.3]{Boa4}, in this case, irreducibility is equivalent to stability in the sense of Geometric Invariant Theory. 
It then follows from the theorem and Section~\ref{sec:GIT} that this subset carries a natural structure of regular holomorphic Poisson variety, called the wild character variety determined by $G$ and the irregular curve $(X, \mathbf{a}, \{ Q_i \}_{i=1}^m)$. 

We are particularly interested in the symplectic leaves of this wild character variety.
Any symplectic leaf is given as the quasi-Hamiltonian reduction of $\operatorname{Hom}_{\mathbb{S}}(\Pi , G)$ at $\prod_i \mathcal{C}_i$ for some set of conjugacy classes $\mathcal{C}_i \subset H_i$. 
(See the discussion after~\cite[Theorem~1.1]{Boa4}.)
Specifically, it is the set 
\begin{equation}\label{eq:symplectic_leaf}
    \mu_H^{-1}(\mathcal{C}_1 \times \cdots \times \mathcal{C}_m)/H_1\times \cdots \times H_m,
\end{equation}
where 
\begin{align*}
    \mu_H\colon \operatorname{Hom}_{\mathbb{S}}(\Pi , G) & \to H_1\times \cdots \times H_m \\ 
    \{ (C_i, h_i , S^i_1, S^i_2, \ldots ) \}_{i=1}^m & \mapsto (h_1, \ldots , h_m).
\end{align*}
The wild character variety determined by $G$, the irregular curve $(X, \mathbf{a}, \{ Q_i \}_{i=1}^m)$ and the conjugacy classes $\{ \mathcal{C}_i \}_{i=1}^m$ is the holomorphic symplectic variety~\eqref{eq:symplectic_leaf}.

\subsubsection{Stokes local systems -- twisted case}

Here, we lift the assumption that the matrices $A_l$ occurring in~\eqref{eq:irreg} are diagonalizable. 
We do not give full details because the results go completely parallel to the untwisted case, see~\cite{BY}. 
Instead, we just focus on the differences that occur. 

First, the singular directions (whose definition agrees with the one in~\eqref{eq:singular_direction}) at a twisted irregular singular point in the $t$-plane no longer necessarily come in opposite pairs, in particular there may be an odd total number of them. 
Namely, they do come in pairs in the $u$-plane (see~\eqref{eq:extension}), but as $u$ ranges over a full angle $2\pi$, $t$ only ranges over a sector of opening $2\pi/d$, where only some of the singular directions occur. 

Second, the role of the subgroups $H_i$ is played by the group of graded automorphisms of the trivial $G$-torsor on $\partial_i$, the grading coming from the irregular type.  
Assuming that the eigenvalues of the irregular parts are generic, this means that compared to a maximal torus, the dimension of $H_i\subset G$ drops by some amount. 
In practical terms, 
\begin{equation}\label{eq:twisted_torus}
    H_i = \prod_Q T_Q
\end{equation} 
where for each $Q$, $T_Q$ is a maximal torus of the regular singular constituents $\mathcal{F}_Q$ appearing in~\eqref{eq:HLT}. 
Notice that by maximal tori we mean those of the simple groups $\operatorname{SL}(r, \mathbb{C})$, because central elements act trivially. 

\subsubsection{Affine GIT and symplectic quotients}\label{sec:GIT}

Let us be given an affine scheme $S = \operatorname{Spec} R$ for a ring $R$.
Let $G$ be a reductive algebraic group and $K\subset G$ a maximal compact subgroup.  
Assume that $G$ acts on $S$ algebraically. 
Then the affine Geometric Invariant Theory quotient of $S$ by $G$ is defined to be the affine scheme $S/\!\!/G = \operatorname{Spec} R^G$, where 
\[
R^G = \{ r\in R \vert \, g\cdot r = r \; \mbox{for all} \; g\in G \}.
\]
On the other hand, when $S$ carries a natural symplectic structure and the restricted action of $K$ on $S$ is Hamiltonian, then there exists an equivariant momentum map 
\[
 \mu\colon S\to \mathfrak{k}^{\vee}. 
\]
The symplectic quotient of $S$ by $K$ is then 
\[
    \mu^{-1}(\zeta ) / K 
\]
for a central element $\zeta \in \mathfrak{k}^{\vee}$. 
We will frequently use the following result. 
\begin{theorem}[Kempf--Ness]~\cite{MFK}\label{thm:KN}
 The Geometric Invariant Theory quotient $S/\!\!/G$ is canonically isomorphic to the symplectic quotient $\mu^{-1}(\zeta ) / K$. 
\end{theorem}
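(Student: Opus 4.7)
The plan is to reduce to the classical linear setup and then apply the Kempf--Ness convexity argument. First, because $G$ is reductive and $R$ is a finitely generated algebra with a rational $G$-action, one can find a finite-dimensional $G$-invariant subspace $W \subset R$ that generates $R$ as a $\mathbb{C}$-algebra; dualizing gives a closed $G$-equivariant embedding $S \hookrightarrow V := W^{\vee}$ into a finite-dimensional linear $G$-representation. Choose a $K$-invariant Hermitian inner product on $V$ (possible since $K$ is compact), and let $\zeta = 0$ after using the standard shifting trick, i.e.\ replacing $S$ by $S \times \mathcal{O}_{-\zeta}$ where $\mathcal{O}_{-\zeta}$ is the coadjoint orbit through $-\zeta$.

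Next, for each $x \in S \subset V$ introduce the Kempf--Ness function
\[
    F_x : G/K \longrightarrow \mathbb{R}, \qquad gK \longmapsto \tfrac{1}{2} \lVert g \cdot x \rVert^2.
\]
The key computation is that, with respect to the canonical non-positively curved Riemannian structure on the symmetric space $G/K$, the function $F_x$ is geodesically convex, and its differential at $gK$ vanishes precisely when $\mu(g \cdot x) = 0$. Geodesic convexity then yields the two pillars of the theorem: (a) each $G$-orbit meets $\mu^{-1}(0)$ in at most one $K$-orbit, and (b) a $G$-orbit meets $\mu^{-1}(0)$ if and only if it is closed in $S$.

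With these in hand, one concludes by recalling that for an affine reductive quotient the closed points of $S /\!\!/ G = \operatorname{Spec} R^G$ are in natural bijection with the closed $G$-orbits in $S$. Combining with (a) and (b) produces a bijection
\[
    \mu^{-1}(0)/K \;\longrightarrow\; S /\!\!/ G ,
\]
induced by the composition of the inclusion $\mu^{-1}(0)\hookrightarrow S$ and the affine quotient map. This map is continuous, and the geodesic convexity also gives that the $G$-saturation retracts onto $\mu^{-1}(0)$, so the map is a homeomorphism. To upgrade to a canonical isomorphism of analytic (or algebraic) spaces, one observes that the pullback of regular $G$-invariant functions to $\mu^{-1}(0)/K$ separates points and generates the ring of smooth $K$-invariant functions, and then applies normality of $S/\!\!/G$.

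The main obstacle is the convexity analysis of $F_x$ along geodesics of $G/K$: one has to identify the Hessian in terms of the moment map, show strict convexity transversely to the stabilizer, and conclude properness of $F_x$ on closed orbits in order to get existence of minima. Once this is established, the identification of zeros of $dF_x$ with the vanishing of $\mu$ is a direct calculation, and the rest of the argument proceeds as outlined.
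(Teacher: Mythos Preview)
The paper does not prove this theorem at all: it is stated as a black box with a citation to~\cite{MFK} and then invoked in Section~\ref{sec:Betti}. So there is no ``paper's own proof'' to compare your proposal against.

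That said, your sketch is essentially the standard Kempf--Ness argument and is correct in outline. A couple of small points: the reduction to $\zeta = 0$ via the shifting trick with a coadjoint orbit is fine but you should note that $\zeta$ is assumed central, so the orbit is a point and the shift is literally just translating the moment map; and your final paragraph on upgrading the homeomorphism to an isomorphism of spaces is vague---invoking normality of $S/\!\!/G$ requires $S$ itself to be normal, which is not among the hypotheses stated here, and in any case the paper only ever uses the statement at the level of identifying the underlying sets of points (to compute the symplectic leaves~\eqref{eq:symplectic_leaf} as affine varieties by working with invariant polynomials). For the purposes of this paper, the set-theoretic bijection together with the identification of the coordinate ring with $R^G$ is all that is needed.
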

We will apply the algebraic side of this theorem in Section~\ref{sec:Betti} to determine the symplectic leaves~\eqref{eq:symplectic_leaf}.

\section{Joshi-Kitaev-Treharne systems}\label{sec:JKT}

Consider $E\rightarrow X$ smooth vector bundle of rank 3, and $D=\sum r_ic_i$ effective divisor on $X$. Furthermore, consider $\nabla$ meromorphic connection on $X$, such that $\nabla$ has given polar part
\begin{equation}
\label{eq:polar}
    \nabla = \operatorname{d} + \operatorname{d}\! Q + \Lambda\frac{\operatorname{d}\!z}{z}+O(1)\operatorname{d}\!z
\end{equation}
in some holomorphic local trivialization of $E$ around each $c_i$, where $Q$ is the irregular type from \eqref{eq:irreg}, $\Lambda$ is some constant $3\times 3$ matrix (called the residue matrix), and $O(1)$ refers to the holomorphic part. By general theory, the meromorphic connection $(E,\nabla)$ can be associated with the $\mathcal{D}$-module $\mathbb{M}$ over $\mathbb{C}(z)$, introduced in Section \ref{sec:connection}. Identifying $\Omega_{\mathbb{C}(z)/\mathbb{C}}$ with $\mathbb{C}(z)\operatorname{d}\! z$, the map $\nabla:E\rightarrow\Omega_{\mathbb{C}(z)/\mathbb{C}}\otimes E$ provides a differential module structure on $\mathbb{M}$, see~\cite[Observation~1.8]{vPS}.

Assuming that $\mathbb{M}$ corresponds to the meromorphic connection $\nabla$ in the above sense, $C=\textrm{Sing}(\mathbb{M})$ is the reduced part of the divisor $D$. Our aim is to apply this general theory to the special cases when we have either $D=\{0\}+2\cdot \{\infty\}$ or $D=3\cdot \{\infty\}$. In the first case, we say that there is a logarithmic (regular) singularity at $c=0$. Regarding the irregular singularity at $c=\infty$, we distinguish three possibilities in both cases, depending on whether the leading order term of the polar part of the local form at $c=\infty$ ($A_2$ or $A_3$ respectively) is 
\begin{itemize}
    \item semisimple (we call this the untwisted case),
    \item consists of a Jordan block of size $2$ and a Jordan block of size $1$ (minimally twisted case),
    \item or consists of a Jordan block of size $3$ (maximally twisted case).
\end{itemize}

This provides us six different cases (summarized in Table~\ref{tab1}) of rank 3 connections, used by Joshi, Kitaev, and Treharne in~\cite{JKTI},~\cite{JKTII}, in order to give the $3\times 3$ Lax representations of the Painlev\'e equations. We will refer to these as $JKT*$-cases, where the symbol $*$ denotes one of the following:
\begin{equation*}
     \{VI,V,IVa,IVb,II,I\}.
\end{equation*}
Specifically, the local forms at $\infty$ of the meromorphic connection are fixed as follows, with respect to some holomorphic trivialization of $E$. 
In the cases $JKTVI, JKTV, JKTIVa$ corresponding to $D=\{0\}+2\cdot \{\infty\}$, the eigenvalues of $\operatorname{Res}_0 \nabla$ are fixed generic values, denoted by $\nu_i$, $0 \leq i \leq 2$.

\paragraph{JKTVI}
We have $D = 2 \cdot \{ \infty \} + \{ 0 \}$, such that the irregular singularity $c=\infty$ is untwisted, with local form at $c=\infty$ given by 
\begin{equation}
\label{polar1}\tag{JKTVI}
    \nabla=\operatorname{d}+\left[
    \begin{pmatrix}
        a_0 & 0 & 0 \\
        0 & a_1 & 0 \\
        0 & 0 & a_2
    \end{pmatrix}w^{-2}+\begin{pmatrix}
        b_0 & 0 & 0 \\
        0 & b_1 & 0 \\
        0 & 0 & b_2
    \end{pmatrix}w^{-1}+O(1)
    \right]\otimes\textrm{d}w,
\end{equation}
where $a_i,b_i\in\mathbb{C}$ ($i=0,1,2$) are fixed, $a_i$'s are mutually different, and 
\[
b_0+b_1+b_2=\operatorname{Tr Res}_{0} \nabla
\]
because of the residue theorem.
This local form is obviously equivalent to fixing the irregular parts: 
\begin{equation}\label{eq:qi_JKTVI}
    q_i = q_{i,1}w^{-1}=- a_i w^{-1}, \quad 0 \leq i \leq 2, 
\end{equation}
with corresponding residue eigenvalues $b_i$.

\paragraph{JKTV}
We have $D = 2 \cdot \{ \infty \} + \{ 0 \}$, such that the irregular singularity $c=\infty$ is minimally twisted, with local form at $c=\infty$ given by 
\begin{equation}
\label{polar2}\tag{JKTV}
    \nabla=\operatorname{d}+\left[
    \begin{pmatrix}
        a_0 & 1 & 0 \\
        0 & a_0 & 0 \\
        0 & 0 & a_1
    \end{pmatrix}w^{-2}+\begin{pmatrix}
        0 & 0 & 0 \\
        b_0 & b_1 & 0 \\
        0 & 0 & b_2
    \end{pmatrix}w^{-1}+O(1)
    \right]\otimes\textrm{d}w,
\end{equation}
where $a_0,a_1,b_i\in\mathbb{C}$ ($i=0,1,2$) are fixed, $b_0 \neq 0$, $a_0,a_1$ are different, and 
\[
b_1+b_2=\operatorname{Tr Res}_{0} \nabla
\] 
because of the residue theorem.
According to~\cite[Theorem~1.3]{KSz}, this minimally twisted local form is equivalent to fixing the irregular parts: 
\begin{gather}
    q_{0,1}=q_{\pm}=\lambda_2 w^{-1/2}-a_0w^{-1}\nonumber \\
    q_2=-a_1w^{-1}. \label{eq:qi_JKTV}
\end{gather}
with corresponding residue eigenvalues $0,b_1,b_2$. Here $q_{0,1}$ is a 2-valued function, depending on the values of the square root. The coefficient $\lambda_2$ depends only on $b_0,b_1$.
See~\cite[Equations~(4.5a),~(4.5b)]{JKTI}.

\paragraph{JKTIVa}
We have $D = 2 \cdot \{ \infty \} + \{ 0 \}$, such that the irregular singularity $c=\infty$ is maximally twisted, with local form at $c=\infty$ given by 
\begin{equation}
\label{polar3}\tag{JKTIVa}
    \nabla=\operatorname{d}+\left[
    \begin{pmatrix}
        a_0 & 1 & 0 \\
        0 & a_0 & 1 \\
        0 & 0 & a_0
    \end{pmatrix}w^{-2}+\begin{pmatrix}
        0 & 0 & 0 \\
        0 & 0 & 0 \\
        b_0 & b_1 & b_2
    \end{pmatrix}w^{-1}+O(1)
    \right]\otimes\textrm{d}w,
\end{equation}
where $a_0,b_i\in\mathbb{C}$ ($i=0,1,2$) are fixed, and $b_2=\operatorname{Tr Res}_{0} \nabla$, because of the residue theorem. This maximally twisted local form is equivalent to fixing the irregular parts: 
\begin{gather}
    q_{0,1,2}= \lambda_1w^{-1/3}+\lambda_2w^{-2/3}-a_0w^{-1} \label{eq:qi_JKTIVa} 
\end{gather}
with the corresponding residue eigenvalues $0,0,b_2$. Here, $q_{0,1,2}$ is a 3-valued function, depending on the values of the cubic root. The coefficients $\lambda_2,\lambda_1$ depend only on $b_0,b_1,b_2$.

\paragraph{JKTIVb}
We have $D = 3 \cdot \{ \infty \}$, such that the irregular singularity $c=\infty$ is untwisted, with local form at $c=\infty$ given by 
\begin{equation}
\label{polar4}\tag{JKTIVb}
    \nabla=\operatorname{d}+\left[
    \begin{pmatrix}
        a_0 & 0 & 0 \\
        0 & a_1 & 0 \\
        0 & 0 & a_2
    \end{pmatrix}w^{-3}+\begin{pmatrix}
        b_0 & 0 & 0 \\
        0 & b_1 & 0 \\
        0 & 0 & b_2
    \end{pmatrix}w^{-2}+\begin{pmatrix}
        c_0 & 0 & 0 \\
        0 & c_1 & 0 \\
        0 & 0 & c_2
    \end{pmatrix}w^{-1}+O(1)
    \right]\otimes\textrm{d}w,
\end{equation}
where $a_i,b_i,c_i\in\mathbb{C}$ ($i=0,1,2$) are fixed, $a_i$'s are mutually different, and $c_0+c_1+c_2=0$, because of the residue theorem.
This local form is equivalent to fixing the irregular parts: 
\begin{gather}\label{eq:qi_JKTIVb}
    q_i = -b_iw^{-1}-\frac{1}{2}a_iw^{-2}, \quad 0 \leq i \leq 2, 
\end{gather}
with corresponding residue eigenvalues $c_i$.

\paragraph{JKTII}
We have $D = 3 \cdot \{ \infty \}$, such that the irregular singularity $c=\infty$ is minimally twisted, with local form at $c=\infty$ given by  
\begin{equation}
\label{polar5}\tag{JKTII}
    \nabla=\operatorname{d}+\left[
    \begin{pmatrix}
        a_0 & 1 & 0 \\
        0 & a_0 & 0 \\
        0 & 0 & a_1
    \end{pmatrix}w^{-3}+\begin{pmatrix}
        0 & 0 & 0 \\
        b_0 & b_1 & 0 \\
        0 & 0 & b_2
    \end{pmatrix}w^{-2}+\begin{pmatrix}
        0 & 0 & 0 \\
        c_0 & c_1 & 0 \\
        0 & 0 & c_2
    \end{pmatrix}w^{-1}+O(1)
    \right]\otimes\textrm{d}w,
\end{equation}
where $a_0,a_1,b_i\in\mathbb{C}$ ($i=0,1,2$) are fixed, $a_0,a_1$ are different, and $c_1+c_2=0$, because of the residue theorem.
This minimally twisted local form is equivalent to fixing the irregular parts: 
\begin{gather}
     q_{0,1}=q_{\pm}=\lambda_1w^{-1/2}+\lambda_2w^{-1}+\lambda_3w^{-3/2}-\frac{1}{2}a_0w^{-2} \nonumber \\
     q_2=-b_2w^{-1}-\frac{1}{2}a_1w^{-2}. \label{eq:qi_JKTII}
\end{gather}
with corresponding residue eigenvalues $0,c_1,c_2$. Here, $q_{0,1}$ is a 2-valued function, depending on the values of the square root. The coefficients $\lambda_1,\lambda_2,\lambda_3$ depend only on $b_0,b_1,c_0,c_1$.

\paragraph{JKTI}
We have $D = 3 \cdot \{ \infty \}$, such that the irregular singularity $c=\infty$ is maximally twisted, with local form at $c=\infty$ given by 
\begin{equation}
\label{polar6}\tag{JKTI}
    \nabla=\operatorname{d}+\left[
    \begin{pmatrix}
        a_0 & 1 & 0 \\
        0 & a_0 & 1 \\
        0 & 0 & a_0
    \end{pmatrix}w^{-3}+\begin{pmatrix}
        0 & 0 & 0 \\
        0 & 0 & 0 \\
        b_0 & b_1 & b_2
    \end{pmatrix}w^{-2}+\begin{pmatrix}
        0 & 0 & 0 \\
        0 & 0 & 0 \\
        c_0 & c_1 & c_2
    \end{pmatrix}w^{-1}+O(1)
    \right]\otimes\textrm{d}w,
\end{equation}
where $a_0,b_i,c_i\in\mathbb{C}$ ($i=0,1,2$) are fixed, and $c_2=0$, because of the residue theorem.
This maximally twisted local form is equivalent to fixing the irregular parts: 
\begin{gather}\label{eq:qi_JKTI}
    q_{0,1,2} =\lambda_1w^{-1/3}+\lambda_2w^{-2/3}+\lambda_3w^{-1}+\lambda_4w^{-4/3}+\lambda_5w^{-5/3}-\frac{1}{2}a_0w^{-2},
\end{gather}
with corresponding residue eigenvalues $0,0,c_2$. Here, $q_{0,1,2}$ is a 3-valued function, depending on the values of the cubic root. The coefficients $\lambda_i$ ($1\leq i\leq 5$) depend only on $b_i,c_i$. 

The equivalence between the normal form and the expansion of eigenvalues of a connection is formulated in the following lemma.

\begin{lemma}
    Assume that the eigenvalues have the forms \eqref{eq:qi_JKTVI}-\eqref{eq:qi_JKTI}, up to terms of non-negative degree. 
    Then there exist polynomial gauge transformations in the indeterminate $w$, such that the polar parts of \eqref{eq:polar} in the six cases have the normal forms \eqref{polar1}, \eqref{polar2}, \eqref{polar3}, \eqref{polar4}, \eqref{polar5}, \eqref{polar6}, respectively. 
    Vice versa, the expansions of the eigenvalues of the local forms are given by~\eqref{eq:qi_JKTVI}-\eqref{eq:qi_JKTI}, up to terms of nonnegative degree. 
\end{lemma}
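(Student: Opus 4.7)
The plan is to treat the two directions separately and to organize the six cases into three structurally parallel groups: untwisted (JKTVI, JKTIVb), minimally twisted (JKTV, JKTII), and maximally twisted (JKTIVa, JKTI). Within each group the argument will differ only in the Poincar\'e rank $r\in\{1,2\}$ of the irregular singularity at infinity.

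For the \emph{Vice versa} direction I would proceed by direct eigenvalue computation of the prescribed local forms. In the untwisted normal forms~\eqref{polar1} and~\eqref{polar4} the polar matrix is diagonal, so the eigenvalues are simply the diagonal entries, yielding~\eqref{eq:qi_JKTVI} and~\eqref{eq:qi_JKTIVb} at once. In the minimally twisted cases~\eqref{polar2} and~\eqref{polar5}, the top-left $2\times 2$ block has the shape $\bigl(\begin{smallmatrix} a_0 & 1 \\ 0 & a_0 \end{smallmatrix}\bigr)w^{-r}+\bigl(\begin{smallmatrix} 0 & 0 \\ b_0 & b_1 \end{smallmatrix}\bigr)w^{-r+1}+\cdots$, so expanding its characteristic polynomial and solving by Puiseux series in $w^{-1/2}$ produces the two-valued eigenvalue $q_\pm$, while the third eigenvalue is the scalar bottom-right entry. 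The maximally twisted cases~\eqref{polar3} and~\eqref{polar6} are analogous, with a $3\times 3$ Jordan block and Puiseux expansion in $w^{-1/3}$. The identification of each coefficient $\lambda_j$ with a polynomial in the data $b_i,c_i$ then reduces to matching leading coefficients of each fractional power of $w$.

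For the forward direction, my strategy would be to invoke the Hukuhara--Levelt--Turrittin decomposition~\eqref{eq:HLT} on the Galois cover $u^d=w$ (with $d=1,2,3$ according to the twist), combined with Moser-type reduction to polynomial gauge. In the untwisted cases the leading matrix has distinct eigenvalues $a_i$ and so is diagonalizable by a constant gauge; successive gauges of the form $\mathrm{I}+w^{-k}G_k$ would then eliminate the off-diagonal parts of each subsequent term, while the diagonal residues are pinned down by the prescribed eigenvalue expansion. In the twisted cases, after pulling back to the $d$-fold cover the same diagonalization procedure applies on the cover; descending via the Galois action of $\mathbb{Z}/d$ then forces the Jordan block structure at the leading order and produces a residue matrix whose entries are determined, through the descent formulas, by the Puiseux coefficients $\lambda_j$.

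The hard part will be showing that the very specific shape of the residue matrix in the twisted normal forms---with nonzero entries concentrated in a single row beneath the Jordan block---is genuinely achievable by polynomial gauge, not merely the general Turrittin form in which the residue would be an arbitrary element of the commutant of the Jordan block. I expect this to reduce to a dimension count: the centralizer of a size-$r$ Jordan block in $\mathfrak{gl}(r,\mathbb{C})$ is the $r$-dimensional algebra of upper-triangular Toeplitz matrices, which matches precisely the number of free entries in the prescribed row of the residue; the resulting linear system on the gauge coefficients should then be uniquely solvable order by order, with genericity of the $a_i$ (and of the $\lambda_j$ in the twisted cases) ensuring non-degeneracy.
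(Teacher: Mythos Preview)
Your proposal is correct and broadly sound, but it is considerably more elaborate than what the paper actually does. The paper's own proof is a two-sentence affair: it observes that in these low-rank cases both directions follow from direct computation with the quadratic and cubic formulas for the characteristic polynomial, and then cites~\cite[Theorem~2.1]{KSz} for the general normal-form statement in arbitrary rank. Your \emph{vice versa} direction is exactly this direct computation, so there the approaches coincide. For the forward direction, however, you propose to rebuild the machinery---Hukuhara--Levelt--Turrittin on the ramified cover, Moser-type order-by-order gauge reduction, Galois descent, and a dimension count matching the centralizer of the Jordan block against the free entries of the residue row---whereas the paper simply outsources this to the cited reference. What your approach buys is a self-contained argument that makes transparent \emph{why} the residue in the twisted cases can be forced into a single row (your centralizer dimension count is the right mechanism, and it is the genuine content behind the cited normal-form theorem); what the paper's approach buys is brevity, at the cost of a black-box citation. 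Neither has a gap.
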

\begin{proof}
    The particular cases of interest to us can be calculated directly using the quadratic and cubic formulas. 
    See~\cite[Theorem 2.1]{KSz} for a more general statement in arbitrary rank.
\end{proof}

\begin{defn}
    Let $\mathcal{M}_{dR}^{JKT*}$ be the (de Rham) moduli space of meromorphic integrable connections with fixed divisor $D$ on $\mathbb{C}P^1$ and with fixed polar part at $D$ (including fixed generic values of parameters $a_i,b_i,c_i$) of the just described $JKT*$ system.
\end{defn}

Because of \cite[Theorem 0.2.]{BB} (or rather, a suitable extension~\cite{ESz_Fourier} to the twisted case), these moduli spaces exist and they are hyperK\"ahler manifolds. 
Moreover, the cases $\operatorname{rank} (E) = 3$ and $\operatorname{length} D = 3$ are related to the root system $\widetilde{E}_6$. The cases where the moduli spaces are of the lowest possible complex dimension $2$ have been listed in~\cite[Section~4.1]{Boa3}. 
The six $JKT*$ cases are the degenerations (confluences of singularities) of the rank 3 case with 3 logarithmic singularities, which we analyzed in \cite{ESz}. 

\begin{center}
\begin{tabular}{ |c|c|p{4cm}|  }
 \hline
 \multicolumn{3}{|c|}{Summary of the investigated cases} \\
 \hline
 $JKT$ system& polar divisor of $\nabla$ &type of irregular singularity at $c=\infty$\\
 \hline
 \hline
 $JKTVI$   & $D=\{0\}+2\{\infty\}$    &untwisted\\
 \hline
 $JKTV$ &   $D=\{0\}+2\{\infty\}$  & minimally twisted   \\
 \hline
 $JKTIVa$ &$D=\{0\}+2\{\infty\}$ & maximally twisted\\
 \hline
 $JKTIVb$    &$D=3\{\infty\}$ & untwisted\\
 \hline
 $JKTII$&   $D=3\{\infty\}$  & minimally twisted\\
 \hline
 $JKTI$& $D=3\{\infty\}$  & maximally twisted  \\
 \hline
\end{tabular}
\captionof{table}{The six irregular cases investigated}
\label{tab1}
\end{center}

\section{Betti spaces}\label{sec:Betti}

When $c\in \CP1$ is a regular singularity of the differential module $\mathbb{M}$, the regular version of the Riemann--Hilbert correspondence associates with $\mathbb{M}$ the local system of solutions in a punctured disc $\Delta^*$ around $c$. 
This is equivalent to simply the monodromy data of $\mathbb{M}_c$, which is the representation of the fundamental group $\mathbb{Z}$ of $\Delta^*$ in $G$ given by analytic continuation of a given solution. 
It is determined up to conjugation. 
However, in case of an irregular singular module at $c$, the monodromy transformation is not enough to describe the module, because the asymptotic behavior of the solutions change at the Stokes directions, so Stokes data have to be considered, too. 

See Subsection~\ref{wild character varieties} for the description of the Stokes local system associated with an irregular connection $(E,\nabla)$. 
Let
\[
q_i=\lambda_{1}w^{-1/N }+\cdots +\lambda_{k}w^{-k/N }, \quad q_j=\lambda_{1}'w^{-1/N}+\cdots +\lambda_{k}'w^{-k/N }
\]
be eigenvalues of the irregular part of $\nabla$ at $c=\infty$, and consider their difference 
\[
q_i-q_j=(\lambda_{1}-\lambda_{1}')w^{-1/N}+...+(\lambda_{l}-\lambda_{l}')w^{-l/N}
\]
for some $k,l,N \in \mathbb{Z}_+$, so that $(\lambda_{l}-\lambda_{l}')\neq 0$, and at least one of $\lambda_{k},\lambda_{k}'$ is nonzero. In the untwisted case $N=1$, it follows from our assumptions (for instance, that the $a_i$'s are different in~\eqref{polar1}) that $k=l$. 
Let us use the notation $\varepsilon=e^{2\pi\sqrt{-1}/N}$ for a primitive $N$-th root of unity and let $w^{1/N}$ be a fixed sheet of the $N$-valued function on some angular sector of opening $<2\pi$. 
In the twisted cases $N>1$, and if $q_i,q_j$ belong to the same group of roots under ramification, then for all $1\leq s \leq k$ we have $\lambda_s' = \lambda_s \varepsilon^{-as}$ for some $a\in \mathbb{Z}$. 
In this case, $k>l$ holds if $N\vert k$. 
Recall now from~\cite[Definition 7.18]{vPSg} that the Stokes directions for the pair $\{q_i-q_j\}$ form the set of $\varphi\in S^1$ that are tangent at $0$ to the smooth components of the curve 
\begin{equation}
\label{eq:realpart}
    \textrm{Re}\left((\lambda_{l}-\lambda_{l}')w^{-l/N}\right)=0, \hspace{0.5cm} w=\vert w \vert e^{\sqrt{-1}\varphi}.
\end{equation}
In general, the Stokes directions are the solutions $\varphi$ of 
\begin{equation}
\label{Stokesdir}
\operatorname{Arg} (\lambda_{l}-\lambda_{l}') - \frac{l \varphi}N \in \frac{(2\mathbb{Z}+1)\pi}{2}.
\end{equation}

\begin{rmrk}
\label{rmrk:Stokes}
    We have 
    \[
     \operatorname{Arg} (\lambda_{l}-\lambda_{l}') = \operatorname{Arg} (\lambda_{l}) + \operatorname{Arg}(1-\varepsilon^{-al}).
    \]
    The term $\operatorname{Arg} (\lambda_{l})$ in this expression just shifts all phases, i.e. rotates the Stokes directions, but otherwise does not change their arrangement. 
    That is, we can make a specific choice for coefficients $\lambda_{l}$ without loss of generality to compute the Stokes directions. 
\end{rmrk}
We refer to~\cite[Section~7]{vPSg} for the notion of Stokes map $S_{\varphi}\in\textrm{SL}(n,\mathbb{C})$ associated to a connection with irregular singularity at $c$ and Stokes direction $\varphi$. 
Loosely speaking, $S_{\varphi}$ is the unipotent matrix describing the change of preferred solutions (i.e., ones that have the expected asymptotic expansions) from one side of the Stokes direction to the other. 
Moreover, let $L_c\in\textrm{GL}(n,\mathbb{C})$ be the parallel transport map from a fixed base point $c_0$ to $c$ (called the link automorphism).  
The monodromy automorphism (or topological monodromy) about $c$ is then given by 
\begin{equation}
\label{holonomy}
    M_c=L_cH\left(\prod_{\varphi} S_{\varphi}\right) L_c^{-1},
\end{equation}
where $\varphi$ ranges through the set of Stokes directions in positive orientation,  and $H$ is the formal monodromy. 
We will explicitly spell out the formula for the formal monodromy below, case by case. 

\begin{defn}
The \emph{exponential torus} of an irregular type~\eqref{eq:irreg} is the centralizer of $A_{r_i}$ in $\operatorname{PGL}(3, \mathbb{C})$. 
\end{defn}
For the exponential tori in the $JKT*$ cases, see Lemma~\ref{lem:exptorus}.
Recall~\cite[Section~3.2]{vPSg} that the differential Galois group of the differential module corresponding to $\nabla$ over $\mathbb{C}(z)$ is the linear algebraic group generated by the exponential torus and the formal monodromy. 

To define the formal monodromy of $\nabla$, consider the universal differential ring extension
\begin{equation*}
    \textrm{Univ}R:=\mathbb{C}(\!(z)\!)\left[z^a,\textrm{log}z\right]\textrm{, where $a\in\mathbb{C}$, $z^1=z$, $z^az^b=z^{a+b}$}
\end{equation*}
with the $\mathbb{C}(\!(z)\!)$-linear differential automorphism $H\in\textrm{GL}(n,\mathbb{C})$, defined by the action $Hz^a=e^{2\pi\sqrt{-1}a}z^a$, and $H(\textrm{log}z)=\textrm{log}z+2\pi\sqrt{-1}$, called the formal monodromy. 
Now consider the free $\textrm{Univ}R$-module 
\begin{equation*}
    \overline{\textrm{Univ}R}:=\bigoplus_{i=1}^n \textrm{Univ}R\cdot e(q_i),
\end{equation*}
with the extension $He(q_i)=e(Hq_i)$. 
Then, the solution space of the matrix differential equation $\nabla=\operatorname{d} + A$, where $A\in\mathfrak{g}(n,\mathbb{C}(\!(z)\!))$, is
\begin{equation*}
    V=\{v\in\overline{\textrm{Univ}R}\vert(\operatorname{d}+A)v=0\}=\bigoplus_{i=1}^n V_{q_i}
\end{equation*}
corresponding to the above decomposition, and obviously $H(V_{q_i})=V_{Hq_i}$ for all $i$. 
In order to obtain the concrete formal monodromies that we are interested in, let us distinguish three cases.
\begin{itemize}
    \item In the untwisted case the subspaces $V_{q_i}$ are invariant under the formal monodromy, so its matrix is diagonal, namely
    \begin{equation*}
        H_1=\textrm{exp}\left(2\pi\sqrt{-1}\textrm{Diag}\left[\textrm{Res}_{c}\nabla\right]\right)=
        \begin{pmatrix}
\alpha & 0 & 0\\
0 &  \beta & 0\\
0 & 0 & \gamma
\end{pmatrix}.
    \end{equation*}
where $\textrm{Diag}\left[\textrm{Res}_{c}\nabla\right]$ means the diagonal part of the residue of $\nabla$ at $c$, which is fixed by our assumption. Hence $\alpha\beta\gamma$ is also fixed, say $\alpha\beta\gamma=1$ (without loss of generality). 
    \item In the minimally twisted case $V=V_{q_+}\oplus V_{q_{-}}\oplus V_{q_2}$, where $H_2$ exchanges the subspaces $V_{q_+}$ and $V_{q_-}$, while $V_{q_2}$ is invariant. There is a basis $\{e_+,e_-,e_2\}$ of $V$, such that $H_2e_2=\alpha e_2$ for some $\alpha\in\mathbb{C}^{\times}$ and $e_+,e_-$ are chosen in the way that $H_2e_+=e_-$. By the assumption $H_2\in\textrm{SL}(3,\mathbb{C})$, we have necessarily $H_2e_-= -\alpha^{-1}e_+$, and 
    \[
H_2= \begin{pmatrix}
0 & -\alpha^{-1} & 0\\
1 &  0 & 0\\
0 & 0 & \alpha
\end{pmatrix} .
\]
    
    \item In the maximally twisted case the $V_{q_i}$ subspaces are cyclically permuted by $H_3$, and we can choose a basis $\{e_0,e_1,e_2\}$ in $V$, such that $H_3e_0=e_1$, and $H_3e_1=e_2$. Again by the determinant condition we have $H_3e_3=e_1$, and 
    \[
H_3= \begin{pmatrix}
0 & 0 & 1\\
1 &  0 & 0\\
0 & 1 & 0
\end{pmatrix} .
\]
\end{itemize}

As we said, the differential Galois group for $\nabla$ is generated by the exponential torus in addition to the formal monodromy, that is, we have to consider the action of the maximal torus of the $G$ structure group on the Stokes data, which is a $(\mathbb{C}^{\times})^k$-action for $k\geq0$ integer. This action is described by the following lemma. 

\begin{lemma}\label{lem:exptorus}
    \begin{enumerate}
        \item[i)] The dimension of the exponential torus is $k=2,1,0$ in the untwisted, the minimally twisted, and the maximally twisted cases respectively.
        \item[ii)] When $D = 2\cdot \{ \infty \}$, that is, in the cases $VI, V, IVa$, the $G$-invariant elements of the coordinate ring of the set of Stokes data are generated by: $x_1x_4,x_2x_5,x_3x_6,x_1x_3x_5,x_2x_4x_6$. Here the notation for the elements of the Stokes matrices, corresponding to the pairs $\{q_i-q_j\}$, is encoded by:
        \begin{center}
\begin{tabular}{ c|c|c|c|c|c|c } 

 pairs & $\{q_0-q_1\}$ & $\{q_0-q_2\}$ & $\{q_1-q_2\}$ & $\{q_1-q_0\}$ & $\{q_2-q_0\}$ & $\{q_2-q_1\}$ \\ 
 \hline
 elements & $x_1$ & $x_2$ & $x_3$ & $x_4$ & $x_5$ & $x_6$ \\ 
\end{tabular}
\end{center} 
\item[iii)]
When $D = 3\cdot \{ \infty \}$, i.e. in the cases $IVb, II, I$, further coefficients $x_7,...,x_{12}$ need to be introduced too. 
The invariant polynomials are generated by the same monomials as in the previous part and the same table applies, up to adding $6$ to any of the indices of the variables $x_i$. 
    \end{enumerate}
\end{lemma}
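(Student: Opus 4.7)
The plan is to handle the three parts sequentially, with part (i) providing the structural input that governs (ii) and (iii).

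For part (i), my approach is direct linear-algebraic computation of the centralizer of $A_{r_i}$ in $\operatorname{GL}(3,\mathbb{C})$ and extraction of its reductive (maximal-torus) part, followed by passage to $\operatorname{PGL}(3,\mathbb{C})$. In the untwisted case, $A_{r_i}$ is regular semisimple, so its centralizer is the full diagonal torus, giving $k = 3 - 1 = 2$. In the minimally twisted case, the centralizer of $\operatorname{diag}(J_2, a_1)$ consists of matrices $\operatorname{diag}(\alpha I_2 + \beta N, \gamma)$; its maximal torus is the two-dimensional subgroup $\{\beta = 0\}$, yielding $k = 1$. In the maximally twisted case, the centralizer of the regular nilpotent $J_3$ is a three-dimensional abelian algebra spanned by powers of $J_3 - a_0 I$, but its maximal torus consists only of scalars, so $k = 0$.

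For part (ii), I would first identify the action of the exponential torus on Stokes coefficients. Since the Stokes matrix at a singular direction supported by the root $q_i - q_j$ lies in the one-parameter unipotent subgroup generated by the elementary matrix $E_{ij}$, conjugation by $\operatorname{diag}(t_0, t_1, t_2)$ rescales its coefficient by $t_i t_j^{-1}$. Using the pair-to-variable dictionary in the lemma's table, this produces six explicit weights for $x_1,\dots,x_6$, and a monomial $\prod x_k^{a_k}$ is invariant iff its combined weight vanishes in $\mathbb{Z}^3 / \mathbb{Z}(1,1,1)$. A quick calculation reduces this to the three linear relations $a_1 + a_2 = a_4 + a_5$, $a_3 + a_4 = a_1 + a_6$, $a_5 + a_6 = a_2 + a_3$, all of which the five proposed monomials obviously satisfy. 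Generation I would establish by a standard degree-reduction argument: if $\min(a_i, a_{i+3}) \geq 1$ for some $i \in \{1,2,3\}$, factor out $x_i x_{i+3}$ and induct on degree; otherwise, the three linear relations together with non-negativity of the exponents force the remaining monomial to be a pure power of either $x_1 x_3 x_5$ or $x_2 x_4 x_6$.

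For part (iii), the crucial structural observation is that doubling the pole order of the irregular type (passing from $D = 2\{\infty\}$ to $D = 3\{\infty\}$) doubles the number of singular directions supported by each root pair, so each $x_k$ of (ii) is accompanied by a sibling $x_{k+6}$ sitting in a Stokes group with the same root and therefore carrying the same torus weight. The invariance condition thus reduces to the identical linear system but on $\mathbb{Z}^{12}$, and the generator list is obtained by replacing any subset of the variables in the five original monomials by their index-shifted twins; the inductive reduction of (ii) adapts immediately. The main difficulty I foresee lies not in the untwisted branch but in the twisted cases of (ii) and (iii): by part (i) the exponential torus is strictly smaller there, which a priori could enlarge the invariant ring, so one must verify that the Galois action identifying the ramified $q_i$'s contributes exactly the constraints needed to restore the same generators uniformly across all three cases with fixed $D$.
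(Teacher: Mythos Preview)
Your proposal is correct and close in spirit to the paper's proof, but there are two noticeable differences worth flagging.

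For part (i), you compute the centralizer of the leading coefficient $A_{r_i}$ in $\operatorname{GL}(3,\mathbb{C})$, extract its maximal torus, and pass to $\operatorname{PGL}$. The paper instead works on the solution side: it decomposes $V=\bigoplus V_{q_i}$, notes how the formal monodromy $H$ permutes the summands (trivially, by a transposition, or by a $3$-cycle in the three cases), and then asks which diagonal base changes $\{e_i\}\mapsto\{\lambda_i e_i\}$ commute with that permutation. This directly produces $(\mathbb{C}^\times)^2$, $\mathbb{C}^\times$, and $\{1\}$ respectively. Your route is more algebraic and theirs more representation-theoretic, but they coincide because the Galois action on the $q_i$ determines both the block structure of $A_{r_i}$ and the permutation action of $H$.

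For part (ii), you and the paper agree on the mechanism (torus conjugation scales $x_k$ by the character attached to the root $q_i-q_j$), but the paper packages the generation claim via the $A_2$ root system picture rather than your explicit degree-reduction induction; your argument is more self-contained. Your closing worry about the twisted cases is exactly right, and the paper resolves it not by enlarging the torus but by observing that the set of Stokes variables shrinks in tandem: in the minimally twisted case $x_4$ is absent, and in the maximally twisted case $x_5,x_6$ are absent, so the invariant ring stays four-dimensional in every case (this is made explicit in the Remark immediately following the proof).
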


\begin{proof}
    \begin{itemize}
        \item[i)] 
        In the untwisted case, we have two degrees of freedom in choosing the parameters $\alpha,\beta,\gamma$, while the subspaces $V_{q_i}$, $i=0,1,2$ are unique. The possible base changes are $\{e_0,e_1,e_2\}\leftrightarrow \{\lambda e_0,\mu e_1,(\lambda\mu)^{-1}e_2\}$, for $(\lambda,\mu)\in(\mathbb{C}^{\times})^2$, and the group action on the data $\{x_i\}$ is two-dimensional.

        In the minimally twisted case, the possible base changes are given by $\{e_+,e_-,e_2\}\leftrightarrow \{\lambda e_+,\lambda e_-,\lambda^{-2}e_2\}$ for $\lambda\in\mathbb{C}^{\times}$, because of the described action of $H$. Therefore, the action on $\{x_i\}$, decreases the degree of freedom by one.

        In the maximally twisted case, there is no degree of freedom in the action of $H$, the chosen basis is unique, and therefore the group action on $\{x_i\}$ is trivial.
        \item[ii)] Consider the $G$-action on the Stokes matrices, where
        \begin{equation*}
            G=\left\{\begin{pmatrix}
                \lambda & 0 & 0\\
                0 &  \mu & 0\\
                0 & 0 & 1
            \end{pmatrix}\colon\quad \lambda,\mu\in\mathbb{C}^{\times}\right\}
        \end{equation*}
        (assuming $\mu=1$ in the minimally twisted case and $\lambda=\mu=1$ in the maximally twisted case). That is
        \begin{equation*}
            \begin{pmatrix}
                \lambda & 0 & 0\\
                0 &  \mu & 0\\
                0 & 0 & 1
            \end{pmatrix}\cdot \begin{pmatrix}
                \alpha & x_1 & x_2\\
                x_4 &  \beta & x_3\\
                x_5 & x_6 & \gamma
            \end{pmatrix}\cdot \begin{pmatrix}
                \lambda^{-1} & 0 & 0\\
                0 &  \mu^{-1} & 0\\
                0 & 0 & 1
            \end{pmatrix} = \begin{pmatrix}
                \alpha & \lambda\mu^{-1}x_1 & \lambda x_2\\
                \lambda^{-1}\mu x_4 &  \beta & \mu x_3\\
                \lambda^{-1}x_5 & \mu^{-1}x_6 & \gamma
            \end{pmatrix}
        \end{equation*}
        (here the indices of the $x_i$'s are to be understood modulo 6). This shows that we have six roots in the corresponding root system $A_2$. We can associate weight vectors to the variables $x_i$ for the $(\mathbb{C}^{\times})^2$-action. This is represented in Figure \ref{fig:fig7}.
        \begin{figure}[ht]
\centering
\includegraphics[width=8.0cm]{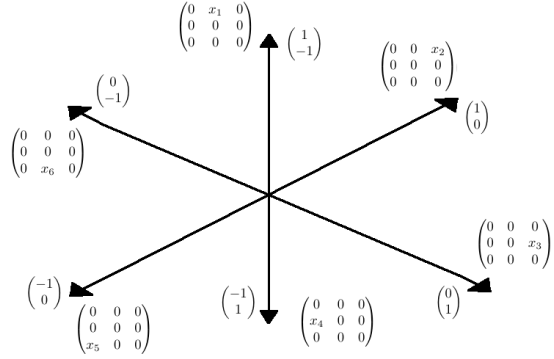}
\caption{Root system with the weight vectors}
\label{fig:fig7}
\end{figure}
The ring of invariant elements of the coordinate ring of Stokes data is generated by the monomials where the sum of the weight vectors add up to 0. These are: $x_1x_4$, $x_2x_5$, $x_3x_6$, $x_1x_3x_5$, $x_2x_4x_6$ (possibly some of the $x_i$'s are fixed to be $1$ in the twisted cases). Notice that these are precisely the monomials such that the corresponding pairs  $\{q_i-q_j\}$ add up to $0$ as well.
\item[iii)] Similar to ii). We leave it to the reader to check the full details. 
\end{itemize}
\end{proof}

\begin{rmrk}
\label{rmrk:variables}
    The second point ii) of the above lemma illustrates the statement of point i). In the untwisted case, the coordinate ring of the Stokes data under the group action reads as 
    \begin{equation*}
        \mathbb{C}[x_1,...,x_6]^G=\mathbb{C}[U,V,W,R,T]/\mathcal{I}
    \end{equation*}
      because of the non-trivial relation $UVW=RT$, where we have introduced the new variables for the invariant monomials: $U:=x_1x_4,V:=x_2x_5,W:=x_3x_6,R:=x_1x_3x_5,T:=x_2x_4x_6$ and the principal ideal is $\mathcal{I}=(UVW-RT)$. 
      We see that the dimension of the coordinate ring decreased by two under the action. 

      In the minimally twisted case (as we shall see) the Stokes coefficient $x_4$ does not appear, and we have $U:=x_2x_5,V:=x_3x_6,W:=x_1,R:=x_2x_6,T:=x_3x_5$ (or $T:=x_1x_3x_5$). 
      That is to say, the dimensions of both the original coordinate ring and the group $G$ fall by one, and we have:
      \begin{equation*}
        \mathbb{C}[x_1,x_2,x_3,x_5,x_6]^G=\mathbb{C}[U,V,W,R,T]/\mathcal{I}
    \end{equation*}
    where $\mathcal{I}=(UV-RT)$ (or $\mathcal{I}=(UVW-RT)$ depending on the choice of $T$).

    In the maximally twisted case (as we shall see) neither $x_5$ nor $x_6$ are present in the Stokes matrices, and the coordinate ring again has the same dimension $4$ after taking quotient:
    \begin{equation*}
        \mathbb{C}[x_1,x_2,x_3,x_4]^G=\mathbb{C}[U,V,W,R,T]/\mathcal{I}
    \end{equation*}
    where $U:=x_1x_4,V:=x_2,W:=x_3,R:=x_1x_3,T:=x_2x_4$, and the ideal is $\mathcal{I}=(UVW-RT)$. 
    We note that here the introduction of the new variables is unnecessary and will be omitted in the actual computations.
\end{rmrk}

Globally, we have a finite set $\{c_1,...,c_m\}$ of singular points, thus the above data, and the monodromy automorphisms $T_{c_i}$ can be determined at all of them. 
The global condition on them is that their product equals to the identity:
\begin{equation*}
    M_{c_m}\cdots M_{c_1}=I
\end{equation*}
In the cases we are interested in, the number of singular points is either $m=1$ or $m=2$. 
If $m=1$, the above condition means that the topological monodromy $M_{c_1}$ is equal to the identity:
\begin{equation}
    \label{identity}
    M_{c_1}=H\left(\prod_{\varphi} S_{\varphi}\right)=I.
\end{equation}
If $m=2$, one of the singularities (say $c_2$) is logarithmic, so the Stokes data are relevant only at $c_1$. 
The above condition then means that the topological monodromy $M_{c_1}$ of the irregular singularity belongs to a given conjugacy class. 
For a generic semi-simple conjugacy class in $\operatorname{SL}(3,\mathbb{C})$, this is in turn equivalent to its characteristic coefficients being fixed:
\begin{equation}
    \label{conjugacy}
    \textrm{Tr}(M_{c_1})=p, \hspace{0.5cm} \textrm{Tr}(M_{c_1}^2)=q
\end{equation}
(note that its determinant is necessarily $1$), for suitable $p,q\in\mathbb{C}$.

\begin{defn}
    The wild character variety or Betti moduli space $\mathcal{M}_B^{JKT*}$  of the respective $JKT*$ system is the affine GIT-quotient of the $(\mathbb{C}^{\times})^k$-action on the set of Stokes data, satisfying \eqref{identity} or \eqref{conjugacy}, depending on whether $m=1$ or $m=2$, defined in Section \ref{wild character varieties} (see, in particular, Theorem~\ref{thm:KN}).
\end{defn}

The Riemann--Hilbert--Birkhoff correspondence (Theorem~\ref{thm:RHB}) applies and establishes a $\mathbb{C}$-analytic isomorphism  
\begin{equation*}
    RH:\mathcal{M}_{dR}^{JKT*}\rightarrow\mathcal{M}_B^{JKT*}
\end{equation*}
between the de Rham and Betti spaces we investigate here.
%In particular, this allows us to compute the equations of the Betti spaces based on the fixed data of the de Rham spaces.

In the sequel, we will determine exactly, in all the six investigated cases, the structure of the Stokes data and the equation of the wild character variety.
In each case, we find an affine cubic surface, or more precisely a family of affine cubic surfaces over some parameter space $\mathcal{P}$ (summarized in Table \ref{tab2}). 
As we will see, these equations turn out to be exactly the same as those of the corresponding irregular rank two local systems on $\mathbb{C}P^1$ belonging to Painlev\'e isomonodromic families, described in \cite{vPS}. 
We analyze the parameters that appear in the equations of the surfaces and give the correspondence with the parameters in the rank two cases in \cite{vPS}.

\subsection{The logarithmic case}

For the sake of completeness, we mention this case, even though it does not belong to the cases we investigate here. 
In this case, the connection has three logarithmic singular points on $\mathbb{C}P^1$. 
In our previous paper, we completely described this case from the Betti point of view \cite[Section 3]{ESz}. 
Here, given that we only have regular singular points, the Stokes phenomenon does not play a role, and the monodromy data can be described by the traces of monodromy matrices on based loops around the three points (which we called Lawton's trace coordinates \cite{L}). 
Note that in arbitrary rank,~\cite[Theorem~1.3]{Pro} states that traces of products of monodromy matrices generate the invariant ring. 

It is worth to mention that here the Betti moduli space is also an affine cubic surface, but its projective compactification is slightly different from the other cases, namely its compactifying curve has 
\begin{equation*}
    0=-XYZ+X^3+Y^3
\end{equation*}
as equation.
In contrast to the irregular cases (where the compactifying curve is a chain of three rational curves), this has a unique component, a nodal rational curve.

\subsection{The $JKTVI$ case}

As we mentioned before, it was proven in~\cite{Har} that Painlev\'e VI also arises as isomonodromy deformation of a rank three system of linear differential equations with an irregular singularity of Poincar\'e rank $1$ and a logarithmic singularity.
This case was partially analyzed in the paper \cite[Example 2.2]{vPS}, 
%and it was also mentioned there that it can be transformed to the Painlev\'e VI case. 
see also \cite[Section~11.1]{Boa4}. 
Here, we describe the Stokes data and the wild Betti space.

We recall the setup from Section~\ref{sec:JKT}: $c_2=0$ is a logarithmic singularity with eigenvalues of its residue equal to $(\nu_1, \nu_2, \nu_3)$. In addition to this, at $c_1=\infty$, there is an untwisted irregular singularity of the form~\eqref{polar1}. 
The eigenvalues of the irregular part at $c_2$ can be derived from formula \eqref{eq:qi_JKTVI}. 
Namely, these are $q_0=z = w^{-1},q_1=\varepsilon q_0 ,q_2=\varepsilon^2 q_0$, where $\varepsilon=e^{2\pi i/3}$ is the cubic root of unity. Here we set $(a_0,a_1,a_2)=(1,\varepsilon,\varepsilon^2)$ by virtue of Remark~\ref{rmrk:Stokes}. 
The Stokes directions are determined by the formula~\eqref{Stokesdir} for all possible pairs $\{q_i-q_j\}$, in the order: $\{q_0-q_1\}$, $\{q_0-q_2\}$, $\{q_1-q_2\}$, $\{q_1-q_0\}$, $\{q_2-q_0\}$, $\{q_2-q_1\}$, and the directions are $\varphi=k\frac{\pi}{3}$, $k=1,...,6$, with corresponding Stokes matrices:
\begin{gather*}
    S_1= \begin{pmatrix}
1 & x_1 & 0\\
0 & 1 & 0\\
0 & 0 & 1
\end{pmatrix}, 
S_2= \begin{pmatrix}
1 & 0 & x_2\\
0 & 1 & 0\\
0 & 0 & 1
\end{pmatrix}, 
S_3= \begin{pmatrix}
1 & 0 & 0\\
0 & 1 & x_3\\
0 & 0 & 1
\end{pmatrix}, 
\\ S_4= \begin{pmatrix}
1 & 0 & 0\\
x_4 & 1 & 0\\
0 & 0 & 1
\end{pmatrix}, 
S_5= \begin{pmatrix}
1 & 0 & 0\\
0 & 1 & 0\\
x_5 & 0 & 1
\end{pmatrix}, 
S_6= \begin{pmatrix}
1 & 0 & 0\\
0 & 1 & 0\\
0 & x_6 & 1
\end{pmatrix}.
\end{gather*}
Thus, the topological monodromy here is $M_{\infty}=H_1\prod_{i=6}^1S_i$, and denoting the monodromy at $0$ by $M_0$, the condition $M_0 M_{\infty}=I$ holds. Recall that now we have a two-dimensional torus action, and the invariant monomials are $U:=x_1x_4$, $V:=x_2x_5$, $W:=x_3x_6$, $R:=x_1x_3x_5$, $T:=x_2x_4x_6$. 
The moduli space of these monodromy data is therefore parameterized by the above variables satisfying the tautological relation $UVW=RT$.
The system of equations~\eqref{conjugacy}, complemented by this tautological relation, reads as:
\begin{gather}
    \alpha + \beta U + \beta + \gamma W + \gamma T + \gamma + \gamma V = p \label{p61} \\
    \alpha^2 + 2\alpha\beta U + 2\alpha\gamma T + 2\alpha\gamma V + \beta^2 U^2 + 2\beta^2 U + \beta^2 + 2\beta\gamma W + 2\beta\gamma UT \nonumber \\
    + 2\beta\gamma T + 2\beta\gamma UW + 2\beta\gamma R + 2\beta\gamma UV + \gamma^2 W^2 + \gamma^2T^2 + 2\gamma^2 WT + 2\gamma^2W \nonumber \\
    + 2\gamma^2 T + \gamma^2 + \gamma^2 V^2 + 2\gamma^2 V + 2\gamma^2 VW + 2\gamma^2 VT = q \label{p62} \\
    UVW=RT \label{p63}
\end{gather}
We have three equations in five variables. 
Eliminating two variables (say $U$ and $R$ using \eqref{p61} and \eqref{p62}), we receive a cubic expression with leading order (i.e., degree $3$ homogeneous) term $\gamma VWT + \gamma V^2W + \gamma VW^2=\gamma VWS$, up to substituting $S=V+W+T$. 
This way, the degree $2$ homogeneous part stands as:
\begin{equation*}
    -\alpha W^2 -\beta V^2 - \gamma S^2 + (\alpha+\gamma)SW+(\beta+\gamma)SV+(-\alpha-\beta)VW.
\end{equation*}
Let us now use the linear rescaling $W=-X-(\beta+\gamma)$, $V=-Y-(\alpha+\gamma)$, and $S=-Z-(-\alpha-\beta)$. 
The equation of the surface is then 
\begin{equation*}
    {\mathcal{M}}_B^{JKTVI}=\{(X,Y,Z)\in\mathbb{C}^3|\gamma XYZ+\alpha X^2+\beta Y^2 + \gamma Z^2 + c_1X+c_2Y+c_3Z+c_4=0 \},
\end{equation*}
where $c_i=c_i(p,q,\alpha,\beta,\gamma)\in\mathbb{C}$ are some constants ($i=1,2,3,4$). 
\begin{remark}\label{rem:JKTVI}
    For special choice of parameters (say $(\alpha,\beta,\gamma)=(\varepsilon^2,\varepsilon,1)$), we recover the equation of the Painlev\'e VI-type Betti moduli space of~\cite[Subsection 3.1]{vPS}.
    More precisely, we get a family of affine cubic surfaces ${\mathcal{M}}_B^{JKTVI}\rightarrow\mathbb{C}^4$ over the parameter space $\mathcal{P}=\mathbb{C}^4$.
\end{remark}

\subsection{The $JKTV$ case}\label{sec:Betti_VI}

In this case, similarly to the previous one, there is a logarithmic singularity at $c_2=0$, with the same condition on the eigenvalues, and an irregular one at $c_1=\infty$, where the ramification index is 1 (minimally twisted), see \eqref{polar2}. That is, the irregular part is not diagonal, but block-diagonal, and the eigenvalues are given in \eqref{eq:qi_JKTV}. The $1\times 1$ block has the eigenvalue $q_2=-a_1w^{-1}$, and the $2\times 2$ block has the (double-valued) eigenvalue $q_{0,1}=-a_0w^{-1}+\lambda_2w^{-1/2}$. Recall from Remark \ref{rmrk:Stokes} that for ease of computation we may assume $\textrm{Im}(a_0)=\textrm{Im}(a_1)$. 
Then the Stokes direction for $\{q_{0}-q_{1}\}$ is the solution of $\textrm{Re}(w^{-1/2})=0$, which is $\varphi=\pi$. On the other hand, for pairings $\{q_2-q_{0,1}\}$ and $\{q_{0,1}-q_2\}$ \eqref{eq:realpart} implies $\textrm{Re}(w^{-1})=0$, with solutions $\varphi=\frac{\pi}{2}$ and $\varphi=\frac{3\pi}{2}$. This also means that the real part of $q_2$ becomes equal simultaneously to both value of $q_{0,1}$, and the corresponding Stokes matrices have two off-diagonal elements. Indeed, these are in appropriate order:
\begin{gather*}
    S_1= \begin{pmatrix}
1 & 0 & x_2\\
0 & 1 & x_3\\
0 & 0 & 1
\end{pmatrix}, 
S_2= \begin{pmatrix}
1 & x_1 & 0\\
0 & 1 & 0\\
0 & 0 & 1
\end{pmatrix}, 
S_3= \begin{pmatrix}
1 & 0 & 0\\
0 & 1 & 0\\
x_5 & x_6 & 1
\end{pmatrix}.
\end{gather*}
For the topological monodromies $M_0$ and $M_{\infty}=H_2S_3S_2S_1$ the same conditions apply as in Section~\ref{sec:Betti_VI}. The moduli space of these monodromy data is parameterized by the variables $x_i,(i=1,2,3,5,6)$ ($x_4$ is not present here, see Remark \ref{rmrk:variables}).
An equivariant set of parameters is $(U,V,W,R,T)$ satisfying the conditions $UV=RT$, and \eqref{conjugacy}; here $U:=x_2x_5$, $V:=x_3x_6$, $W:=x_1$, $R:=x_2x_6$, $T:=x_3x_5$ . 
Thus the system of equations determining the complex surface is
\begin{gather}
    \alpha + \alpha U +  W + \alpha V + \alpha TW = p \label{p51} \\
    -2\alpha^{-1} - 2T + 2\alpha R + 2\alpha VW + W^2 + 2\alpha WU + 2\alpha W^2T + \alpha^2V^2 \nonumber \\ 
    + 2\alpha^2V + \alpha^2 + \alpha^2U^2 + \alpha^2W^2T^2 + 2\alpha^2WUT + 2\alpha^2 U \nonumber \\
    + 2\alpha^2 WT + 2\alpha^2 VWT + 2\alpha^2 UV = q \label{p52}
    \\ UV=RT \label{p53}
\end{gather}
Using \eqref{p51} and \eqref{p52}, we can eliminate $U$ and $R$, and what we get is a single equation in three variables, of degree 3:
\begin{equation*}
    \alpha TVW + \alpha V^2 + T^2 + VW + \alpha TW + V(\alpha-p) + T(c+\alpha^{-1})=0,
\end{equation*}
where $c=\frac{q-p^2}{2}$. Applying some further linear substitutions $T=X-\alpha^{-1}$, $V=\alpha^{-1/2}Y-1$, and $W=\alpha^{-1/2}Z$ results in the equation of the Betti moduli space 
\begin{equation*}
    {\mathcal{M}}_B^{JKTV}=\{(X,Y,Z)\in\mathbb{C}^3| XYZ+ X^2+ Y^2 + c_1X+c_2Y+c_3Z+c_4=0 \},
\end{equation*}
where $c_i=c_i(p,q,\alpha)\in\mathbb{C}$ are some constants ($i=1,2,3,4$). 
\begin{remark}
    This equation is in agreement with the one in \cite[Subsection 3.2]{vPS}, for the Painlev\'e V case. For a suitable choice of $c_i$, we can obtain the parameters $s_i$ in \cite{vPS}, and ${\mathcal{M}}_B^{JKTV}\rightarrow\mathcal{P}= \mathbb{C}\times \mathbb{C}\times\mathbb{C}^{\times}$ is indeed a family of affine cubic surfaces (with our notation the coordinates of the base are $p,q\in\mathbb{C}$ and $\alpha\in\mathbb{C}^{\times}$).
\end{remark}

\subsection{The $JKTIVa$ case}

Here we have a similar divisor as in the previous two cases, but now the irregular singularity is maximally twisted. That is, the irregular part at $c_1=\infty$ has the form of \eqref{polar3}, and its eigenvalues are given by \eqref{eq:qi_JKTIVa}:  $q_{0,1,2}=-a_0w^{-1}+\lambda_2w^{-2/3}+O(w^{-1/3})$. Therefore, formula \eqref{eq:realpart} here reads $\textrm{Re}(\lambda_2(\varepsilon^i-\varepsilon^j)w^{-2/3})=0$.
This time, we set $\textrm{Im}(\lambda_2)=0$ by Remark \ref{rmrk:Stokes}, and the solutions are $\varphi=k\frac{\pi}{2}$, $k=1,2,3,4$. 
 Their order is given by $\{q_0-q_1\}$, $\{q_0-q_2\}$, $\{q_1-q_2\}$, and $\{q_1-q_0\}$. So, the four Stokes matrices are:
\begin{gather*}
    S_1= \begin{pmatrix}
1 & x_1 & 0\\
0 & 1 & 0\\
0 & 0 & 1
\end{pmatrix}, 
S_2= \begin{pmatrix}
1 & 0 & x_2\\
0 & 1 & 0\\
0 & 0 & 1
\end{pmatrix}, 
S_3= \begin{pmatrix}
1 & 0 & 0\\
0 & 1 & x_3\\
0 & 0 & 1
\end{pmatrix}, 
S_4= \begin{pmatrix}
1 & 0 & 0\\
x_4 & 1 & 0\\
0 & 0 & 1
\end{pmatrix}.
\end{gather*}
The topological monodromy at $\infty$ has the form $M_{\infty}=HS_4S_3S_2S_1$, and \eqref{conjugacy} translates to:
\begin{gather}
    \textrm{Tr}(M_{\infty})=p=x_1+x_3+x_2x_4 \label{p41} \\
    \textrm{Tr}(M_{\infty}^2)=q=2x_4+x_1^2+2x_2+2x_1x_2x_4+x_3^2+x_2^2x_4^2+2x_2x_3x_4 \label{p42}
\end{gather}
As we described, the group action does not decrease the dimension, the invariant monomials would be $x_1x_4$, $x_2$, $x_3$, $x_1x_3$, $x_2x_4$ (no $x_5,x_6$ here, see Remark \ref{rmrk:variables}), so the introduction of new variables would not affect the mondoromy data. Now expressing $x_1$ from \eqref{p41}, and substituting to \eqref{p42}, we obtain:
\begin{equation}
\label{eq:p4a}
    x_2x_3x_4+x_3^2+x_4-px_3+x_2+\frac{p^2-q}{2}=0
\end{equation}
In other words, we have the Betti moduli space:
\begin{equation*}
    {\mathcal{M}}_B^{JKTIVa}=\{(X,Y,Z)\in\mathbb{C}^3| XYZ+ X^2 + c_1X+c_2Y+c_3Z+c_4=0 \},
\end{equation*}
for suitable choice of parameters $c_i=c_i(p,q)$ ($i=1,2,3,4$).
\begin{remark}
    This Painlev\'e IV-type Betti moduli space is a family of affine cubic surfaces over parameter space $\mathcal{P}=\mathbb{C}\times\mathbb{C}$. There is an identification for the parameters $s_1,s_2$ in \cite[Subsection 3.7]{vPS} with the current ones, which we explain here. The equation of the monodromy space for the PIV family in \cite{vPS} is:
    \[
    x_1x_2x_3+x_1^2-(s_2^2+s_1s_2)x_1-s_2^2x_2-s_2^2x_3+(s_2^2+s_1s_2^3)=0.
    \]
Introduce the new variables $x_1'=s_2^{-4/3}x_1$, $x_2'=s_2^{-2/3}x_2$, $x_3'=s_2^{-2/3}x_3$. Substitute them into the above equation and divide the equation by $s_2^{8/3}$ (recall that $s_2\in\mathbb{C}^{\times}$ in \cite{vPS}). This results in the equation 
\[
x_1'x_2'x_3'+x_1'^2-(s_2^2+s_1s_2)s_2^{-4/3}x_1'-x_2'-x_3'+(s_2^2+s_1s_2^3)s_2^{-8/3}=0.
\]
That is, the exact correspondence with \eqref{eq:p4a} is given by the change of variables $x_1'\mapsto x_3$, $x_2'\mapsto x_2$, $x_3'\mapsto x_4$, and the  relations between the parameters:
\begin{equation*}
    \begin{cases}
      (s_2^2+s_1s_2)s_2^{-4/3}=p \\
      (s_2^2+s_1s_2^3)s_2^{-8/3}=\frac{p^2-q}{2}.
    \end{cases}
\end{equation*}
\end{remark}

\subsection{The $JKTIVb$ case}

In the remaining three cases, we have only one singularity, which is irregular and is placed at $c_1=\infty$. Here, it is untwisted, which means that the irregular part is diagonal (see \eqref{polar4}), with eigenvalues $q_i=-\frac{1}{2}a_iw^{-2}+O(w^{-1})$ from \eqref{eq:qi_JKTIVb}. Again, without loss of generality, we can choose $a_i=\varepsilon^i$, $i=0,1,2$ (see Remark \ref{rmrk:Stokes}). The solutions for \eqref{Stokesdir} modulo $2\pi$ are $\varphi=k\frac{\pi}{6}$, $k=1,...,12$, that is, we have twelve Stokes directions, with Stokes matrices:
\begin{gather*}
    S_1= \begin{pmatrix}
1 & x_1 & 0\\
0 & 1 & 0\\
0 & 0 & 1
\end{pmatrix}, 
S_2= \begin{pmatrix}
1 & 0 & x_2\\
0 & 1 & 0\\
0 & 0 & 1
\end{pmatrix}, 
S_3= \begin{pmatrix}
1 & 0 & 0\\
0 & 1 & x_3\\
0 & 0 & 1
\end{pmatrix}, 
S_4= \begin{pmatrix}
1 & 0 & 0\\
x_4 & 1 & 0\\
0 & 0 & 1
\end{pmatrix}, 
\\ S_5= \begin{pmatrix}
1 & 0 & 0\\
0 & 1 & 0\\
x_5 & 0 & 1
\end{pmatrix}, 
S_6= \begin{pmatrix}
1 & 0 & 0\\
0 & 1 & 0\\
0 & x_6 & 1
\end{pmatrix},
S_7= \begin{pmatrix}
1 & x_7 & 0\\
0 & 1 & 0\\
0 & 0 & 1
\end{pmatrix}, 
S_8= \begin{pmatrix}
1 & 0 & x_8\\
0 & 1 & 0\\
0 & 0 & 1
\end{pmatrix},
\\S_9= \begin{pmatrix}
1 & 0 & 0\\
0 & 1 & x_9\\
0 & 0 & 1
\end{pmatrix}, 
S_{10}= \begin{pmatrix}
1 & 0 & 0\\
x_{10} & 1 & 0\\
0 & 0 & 1
\end{pmatrix},
S_{11}= \begin{pmatrix}
1 & 0 & 0\\
0 & 1 & 0\\
x_{11} & 0 & 1
\end{pmatrix}, 
S_{12}= \begin{pmatrix}
1 & 0 & 0\\
0 & 1 & 0\\
0 & x_{12} & 1
\end{pmatrix}.
\end{gather*}
The formal monodromy at $w=0$, is $H_1$, with $\alpha\beta\gamma=1$, similarly to the $JKTVI$ case.
The topological monodromy at $w=0$ is $M_{\infty}=H_1 \prod_{i=12}^1S_i$ equal to the identity, since we have only one singularity, see \eqref{identity}. Consider the matrix equation $S_6 S_5 S_4 S_3 S_2S_1=(H S_{12}S_{11}S_{10}S_9S_8S_7)^{-1}$:
\begin{gather*}
\begin{pmatrix}
1 & x_1 & x_2\\
x_4 &  x_1x_4+1 & x_3+x_2x_4\\
x_4x_6+x_5 & x_1x_4x_6+x_6+x_1x_5 & x_3x_6+x_2x_4x_6+x_2x_5+1
\end{pmatrix}    =
\\ = \begin{pmatrix}
\alpha^{-1}(x_7x_{10}+x_8x_{11}-x_7x_9x_{11}+1) & \beta^{-1}(-x_7+x_8x_{12}-x_7x_9x_{12}) & \gamma^{-1}(-x_8+x_7x_9)\\
\alpha^{-1}(-x_{10}+x_9x_{11}) & \beta^{-1}(x_9x_{12}+1) & \gamma^{-1}(-x_9)\\
\alpha^{-1}(-x_{11}) & \beta^{-1}(-x_{12}) & \gamma^{-1}
\end{pmatrix}
\end{gather*}
One can easily see that the variables $x_{11}, x_{10}$ can be expressed in terms of $x_1,...,x_6$, and also $x_9=-\gamma(x_3+x_2x_4)$, and $x_{12}=-\beta(x_1x_4x_6+x_6+x_1x_5)$. Since 
\begin{equation*}
    x_1=\beta^{-1}(-x_7+(x_8-x_7x_9)x_{12})=\beta^{-1}(-x_7-\gamma x_2x_{12})
\end{equation*}
we have $x_7=-\beta x_1-\gamma x_2x_{12}$, and similarly $x_8=-\gamma x_2+x_7x_9$. %also expressed in terms of $x_1,...,x_6$. 
There remain to take into account the three equations for the diagonal entries. One of them is redundant, because of the condition on the determinant, for the other two we introduce the usual $G$-invariant monomials $U=x_1x_4$, $V=x_2x_5$, $W=x_3x_6$, $R=x_1x_3x_5$, $T=x_2x_4x_6$, with relation $UVW=RT$. The system describing the equation of the moduli space then reads as:
\begin{gather}
    \gamma W + \gamma T + \gamma V + \gamma = 1 \label{p31} \\
    U+1=\beta^{-1}(x_9x_{12}+1)=\beta^{-1}(\gamma(x_3+x_2x_4)\beta(x_1x_4x_6+x_6+x_1x_5)+1)= \nonumber \\
    = \gamma(UW+W+R+UT+T+UV)+\beta^{-1}\label{p32}
    \\ UVW=RT \label{p33}
\end{gather}
After eliminating $T$ and $R$ form \eqref{p31} and \eqref{p32}, and substituting to \eqref{p33}, we get the equation of the complex surface:
\begin{equation*}
    UVW+V^2+UV+UW+VW+U(1-\gamma^{-1})+V(2-\alpha-\gamma^{-1})+W(1-\alpha)+(1-\alpha)(1-\gamma^{-1})=0
\end{equation*}
With linear rescaling $U=X-1$, $V=Y-1$, $W=Z-1$ this turns into the form of the Painlev\'e IV-type Betti moduli space
\begin{equation*}
    {\mathcal{M}}_B^{JKTIVb}=\{(X,Y,Z)\in\mathbb{C}^3| XYZ+ Y^2 + c_1X+c_2Y+c_3Z+c_4=0 \},
\end{equation*}
where $c_1=-\gamma^{-1}$, $c_2=-\alpha-\gamma^{-1}-1$, $c_3=-\alpha$, and $c_4=\alpha\gamma^{-1}+\alpha+\gamma^{-1}$.
\begin{remark}
    Again we have ${\mathcal{M}}_B^{JKTIVb}\rightarrow\mathcal{P}=\mathbb{C}^{\times}\times\mathbb{C}^{\times}$, with cubic surfaces as fibers. Introduce new variables $X'=\alpha^{-1}X$, $Y'=\alpha^{-1}\gamma Y$, $Z'=\gamma Z$. By substituting them into the above equation and dividing it by $\alpha^2\gamma^{-2}$ (which is nonzero), we receive
    \[
    X'Y'Z'+Y'^2-\alpha^{-1}\gamma X'-(\alpha^{-1}\gamma+\gamma+\alpha^{-1})Y'-\alpha^{-1}\gamma Z'+(\alpha^{-2}\gamma+\alpha^{-1}\gamma^2+\alpha^{-1}\gamma)=0.
    \]
    Here $s_1=\alpha^{1/2}\gamma^{1/2}+\alpha^{-1/2}\gamma^{-1/2}$, $s_2=\alpha^{-1/2}\gamma^{1/2}$ provides the exact correspondence with the parameters $s_1,s_2$ in \cite[Subsection 3.7]{vPS}. 
\end{remark}

\subsection{The $JKTII$ case}

The irregular singularity at $c_1=\infty$ is minimally twisted in this case. This means that the irregular part has two Jordan blocks, one $1\times 1$ block with the eigenvalue $q_2=-\frac{1}{2}a_1w^{-2}+O(w^{-1})$ and a $2\times 2$ block with the eigenvalues $q_{0,1}=-\frac{1}{2}a_0w^{-2}+\lambda_3w^{-3/2}+O(w^{-1})$ (see formulas \eqref{polar5},\eqref{eq:qi_JKTII}). Therefore, one type of Stokes directions corresponds to pairs $\{q_0-q_1\}$, $\{q_1-q_0\}$, and $\{q_0-q_1\}$, equivalent to solving $\textrm{Re}(w^{-3/2})=0$, with phases $\varphi=\frac{\pi}{3},\pi,\frac{5\pi}{3}$ in formula \eqref{Stokesdir}. The other type of Stokes directions are for $\{q_2-q_{0,1}\}$, $\{q_{0,1}-q_2\}$, $\{q_2-q_{0,1}\}$, $\{q_{0,1}-q_2\}$, equivalent to solving $\textrm{Re}(w^{-2})=0$, with phases $\varphi=\frac{\pi}{4},\frac{3\pi}{4},\frac{5\pi}{4},\frac{7\pi}{4}$ in \eqref{Stokesdir}. Thus, the Stokes matrices are in the appropriate order:
\begin{gather*}
    S_1= \begin{pmatrix}
1 & 0 & x_2\\
0 & 1 & x_3\\
0 & 0 & 1
\end{pmatrix}, 
S_2= \begin{pmatrix}
1 & x_1 & 0\\
0 & 1 & 0\\
0 & 0 & 1
\end{pmatrix}, 
S_3= \begin{pmatrix}
1 & 0 & 0\\
0 & 1 & 0\\
x_5 & x_6 & 1
\end{pmatrix}, 
S_4= \begin{pmatrix}
1 & 0 & 0\\
x_4 & 1 & 0\\
0 & 0 & 1
\end{pmatrix}, 
\\ S_5= \begin{pmatrix}
1 & 0 & x_8\\
0 & 1 & x_9\\
0 & 0 & 1
\end{pmatrix}, 
S_6= \begin{pmatrix}
1 & x_7 & 0\\
0 & 1 & 0\\
0 & 0 & 1
\end{pmatrix},
S_7= \begin{pmatrix}
1 & 0 & 0\\
0 & 1 & 0\\
x_{11} & x_{12} & 1
\end{pmatrix}.
\end{gather*}
Then for the topological monodromy $M_{\infty}=H\prod_{i=7}^1S_i=I$, with formal mondoromy $H_2$ at $z=\infty$. Consider the matrix equation: $S_3S_2S_1=(HS_7S_6S_5S_4)^{-1}$:
\begin{gather*}
    \begin{pmatrix}
1 & x_1 & x_2+x_3x_1\\
0 &  1 & x_3\\
x_5 & x_1x_5+x_6 & x_2x_5+x_3x_1x_5+x_3x_6+1
\end{pmatrix}=
\\ =\begin{pmatrix}
\alpha(x_7-x_8x_{12}) & x_8x_{11}+1 & \alpha^{-1}(-x_8)\\
\alpha(-x_4x_7+x_4x_8x_{12}-x_9x_{12}-1) &  -x_4-x_4x_8x_{11}+x_9x_{11} & \alpha^{-1}(x_4x_8-x_9)\\
\alpha x_{12} & -x_{11} & \alpha^{-1}
\end{pmatrix}.
\end{gather*}
We can express the variables $x_7,x_{12}$ in terms of $x_1,x_2,x_3x_5,x_6$ (by comparing the first and third elements of the first columns of the matrices), and also $x_{11}=-x_1x_5-x_6$, and $x_8=-\alpha(x_2+x_3x_1)$. Moreover
\begin{equation*}
    1=-x_4-(x_4x_8-x_9)x_{11}=-x_4-\alpha x_3x_{11},
\end{equation*}
so $x_4=-1-\alpha x_3x_{11}$, and because of $x_9=-\alpha x_3+x_4x_8$, the variables $x_4$ and $x_9$ can also be eliminated using the first five variables. We have not yet used the three relations for the diagonal entries, one of which is redundant because of the condition on the determinant, and again we gain one more relation $UVW=RT$ by introducing $U=x_2x_5$, $V=x_3x_6$, $W=x_1$, $R=x_2x_6$, $T=x_3x_1x_5$, $G$-invariant monomials. All in all, the system of equations that determine the moduli space is
\begin{gather}
    \alpha^{-1}=U+V+T+ 1 \label{p21} \\
    W=x_1=\alpha(x_2+x_3x_1)(x_1x_5+x_6)+1= \nonumber \\ =\alpha(UW+R+TW+VW)+1\label{p22}
    \\ UVW=RT \label{p23}
\end{gather}
Let us now express $T$ from \eqref{p21} and $R$ from \eqref{p22} and plug in \eqref{p23}, to get the equation:
\begin{equation*}
    UVW+UW+VW-\alpha^{-1}U-\alpha^{-1}V+W(1-\alpha^{-1})+\alpha^{-1}(\alpha^{-1}-1)=0
\end{equation*}
After the linear transformation of the variables $U=X-1$, $V=Y'-1$, $W=Z$, this equation transforms into:
\[
XY'Z -\alpha^{-1}X-\alpha^{-1}Y'-\alpha^{-1}Z+\alpha^{-1}(1+\alpha^{-1})=0
\]
Introducing $Y=\alpha Y'$ and factoring $\alpha$, the Betti moduli space is the following: 
\begin{equation*}
    {\mathcal{M}}_B^{JKTII}=\{(X,Y,Z)\in\mathbb{C}^3| XYZ - X - \alpha^{-1} Y - Z + 1+\alpha^{-1} = 0 \}.
\end{equation*}
\begin{remark}\label{rem:JKTII}
We obtained the same one-parameter family of affine cubic surfaces ${\mathcal{M}}_B^{JKTII}\rightarrow\mathcal{P}=\mathbb{C}^{\times}$ as in \cite[Subsection 3.9]{vPS}, up to the change of parameter $\alpha^{-1} \mapsto \alpha$. 
\end{remark}

\subsection{The $JKTI$ case}

The irregular singularity at $c_1=\infty$ is maximally twisted, the irregular part has the form of \eqref{polar6}, and its eigenvalues are $q_{0,1,2}=-\frac{1}{2}a_0w^{-2}+\lambda_5w^{-5/3}+O(w^{-4/3})$ from \eqref{eq:qi_JKTI}. 
The Stokes directions are equivalent to the phases $\varphi=k\frac{\pi}{5}$, $k=1,...,10$, which solve $\textrm{Re}((\varepsilon^i-\varepsilon^j)w^{-5/3})=0$ in \eqref{eq:realpart}, with the assumption $\textrm{Im}(\lambda_5)=0$ in Remark \ref{rmrk:Stokes}. So, the Stokes matrices are in appropriate order:
\begin{gather*}
    S_1= \begin{pmatrix}
1 & x_1 & 0\\
0 & 1 & 0\\
0 & 0 & 1
\end{pmatrix}, 
S_2= \begin{pmatrix}
1 & 0 & x_2\\
0 & 1 & 0\\
0 & 0 & 1
\end{pmatrix}, 
S_3= \begin{pmatrix}
1 & 0 & 0\\
0 & 1 & x_3\\
0 & 0 & 1
\end{pmatrix}, 
S_4= \begin{pmatrix}
1 & 0 & 0\\
x_4 & 1 & 0\\
0 & 0 & 1
\end{pmatrix},
\\ S_5= \begin{pmatrix}
1 & 0 & 0\\
0 & 1 & 0\\
x_5 & 0 & 1
\end{pmatrix},
S_6= \begin{pmatrix}
1 & 0 & 0\\
0 & 1 & 0\\
0 & x_6 & 1
\end{pmatrix},
S_7= \begin{pmatrix}
1 & x_7 & 0\\
0 & 1 & 0\\
0 & 0 & 1
\end{pmatrix},
S_8= \begin{pmatrix}
1 & 0 & x_8\\
0 & 1 & 0\\
0 & 0 & 1
\end{pmatrix},
\\ S_9= \begin{pmatrix}
1 & 0 & 0\\
0 & 1 & x_9\\
0 & 0 & 1
\end{pmatrix},
S_{10}= \begin{pmatrix}
1 & 0 & 0\\
x_{10} & 1 & 0\\
0 & 0 & 1
\end{pmatrix}.
\end{gather*}
The topological monodromy at $\infty$ has the form $M_{\infty}=H_3\prod_{i=10}^1S_i$, and is equal to the identity $I$ by \eqref{identity}. Therefore the moduli space of this data is described by the variables $x_1,...,x_{10}$ under the condition $S_4S_3S_2S_1=(HS_{10}S_9S_8S_7S_6S_5)^{-1}$:
\begin{gather*}
    \begin{pmatrix}
1 & x_1 & x_2\\
x_4 &  x_1x_4+1 & x_3+x_2x_4\\
0 & 0 & 1
\end{pmatrix}=
\\ =\begin{pmatrix}
-x_8+x_7x_9 & x_7x_{10}+1 & -x_7\\
-x_9 &  -x_{10} & 1\\
x_6x_9+x_5x_8-x_5x_7x_9+1 & x_6x_{10}-x_5x_7x_{10}-x_5 & -x_6+x_5x_7
\end{pmatrix}.
\end{gather*}
Obviously $x_9=-x_4$, $x_{10}=-x_1x_4-1$, $x_7=-x_2$ can be expressed in terms of the first four variables, and also $x_8=x_7x_9-1=x_2x_4-1$. Remember, that here in the maximally twisted case, the maximal torus has dimension zero, so the introduction of the $G$-invariant monomials would not decrease the degree of freedom. Now
\begin{equation*}
    0=x_{10}(x_6-x_5x_7)-x_5=-x_{10}-x_5
\end{equation*}
because $1=-x_6+x_5x_7$, so $x_5=-x_{10}=1+x_1x_4$, moreover $x_6=-1+x_5x_7=-1-x_2(1+x_1x_4)$, so $x_5$ and $x_6$ can be eliminated as well. There are three further relations that we have not used for the four variables, but one of them is redundant, so we are left with two of them:
\begin{gather}
    x_3+x_2x_4=1 \label{p11} \\
    x_1=x_7x_{10}+1=x_1x_2x_4+x_2+1 \label{p12}
\end{gather}
From \eqref{p11} we can eliminate $x_3$, and from \eqref{p12}, with the change of variables: $x_1=-X$, $x_2=Y$, $x_4=-Z$, we get:
\begin{equation*}
    {\mathcal{M}}_B^{JKTI}=\{(X,Y,Z)\in\mathbb{C}^3| XYZ +X+Y+1=0 \}.
\end{equation*}
\begin{remark}
    This Painlev\'e I-type Betti moduli space is exactly same as the one in \cite[Subsection 3.10]{vPS}.
\end{remark}

\begin{center}
\begin{tabular}{ |c|c|c|  }
 \hline
 \multicolumn{3}{|c|}{Summary} \\
 \hline
 $JKT$ system& equation of $\mathcal{M}_B$ moduli space &dim$\mathcal{P}$\\
 \hline
 \hline
 $JKTVI$   & $\gamma XYZ+\alpha X^2+\beta Y^2 + \gamma Z^2 + c_1X+c_2Y+c_3Z+c_4=0$ &   4\\
 \hline
 $JKTV$ &  $XYZ+ X^2+ Y^2 + c_1X+c_2Y+c_3Z+c_4=0$ &3\\
 \hline
 $JKTIVa$ & $XYZ+ X^2 + c_1X+c_2Y+c_3Z+c_4=0$ &  2\\
 \hline
 $JKTIVb$&  $XYZ+ Y^2 + c_1X+c_2Y+c_3Z+c_4=0$  &   2\\
 \hline
 $JKTII$& $XYZ - X - \alpha^{-1} Y - Z + 1+\alpha^{-1} =0$   &1\\
 \hline
 $JKTI$& $XYZ +X+Y+1=0$  &0\\
 \hline
\end{tabular}
\captionof{table}{Summary of results on rank three character varieties, cf. the rank two Painlev\'e cases \cite[Table 1.]{vPS}.}
\label{tab2}
\end{center}

\bigskip
\textbf{Funding}
\bigskip
\\ The project supported by the Doctoral Excellence Fellowship Programme (DCEP) is funded by the National Research Development and Innovation Fund of the Ministry of Culture and Innovation and the Budapest University of Technology and Economics, under a grant agreement with the National Research, Development and Innovation Office. During the preparation of this manuscript, the authors were supported by the grant K146401 of the National Research, Development and Innovation Office.
The second author was also supported by the grant KKP144148 of the National Research, Development and Innovation Office.

\end{document}